\DeclareUrlCommand{\citeurl}{\urlstyle{same}}  
\def\Q{\mathbb{Q}}
\def\C{\mathbb{C}}
\def\N{\mathbb{N}}
\def\Z{\mathbb{Z}}
\def\R{\mathbb{R}}
\newtheorem{theo}{Theorem}[section]
\newtheorem{defi}[theo]{Definition}
\newtheorem{lemm}[theo]{Lemma}
\newtheorem{remark}[theo]{Remark}
\newtheorem{coro}[theo]{Corollary}
\newtheorem{prop}[theo]{Proposition}
\numberwithin{equation}{section}
\begin{document}
	\title{Whittaker Modules for W type Cartan Lie superalgebras}
	
	 \author{Vyacheslav Futorny, Santanu Tantubay}
\address{Shenzhen International Center for Mathematics, Southern University of Science and Technology, Shenzhen, China}
\email{vfutorny@gmail.com}
\email{1mathsantanu@gmail.com}
	 
	 \begin{abstract}
 		We consider the category of Whittaker modules for the Lie superalgebra $W_{m,n}$ of vector fields on $\C^{(m|n)}$. For any $\mathbf{a}\in \C^m$ we show the equivalence between the blocks 
        $\Omega_{\mathbf a}^{\widetilde{W}_{m,n}}$ 
        of the category of $(AW)_{m,n}$-Whittaker modules 
        with finite-dimensional Whittaker subspaces and the category of finite-dimensional modules over certain Lie subsuperalgebra  $T_{m,n}$ of $(AW)_{m,n}$ (and also of $\mathfrak{gl}{(m,n)})$. Then we apply the covering technique to study Whittaker $W_{m,n}$-modules  and describe simple modules in the category $\Omega_{\mathbf a}^{{W}_{m,n}}$ of such modules 
        with finite-dimensional Whittaker subspaces and with non-singular ${\mathbf a}$. 
	\end{abstract}

	\maketitle
	
	\section{Introduction}
Whittaker modules play an important role in representation theory since their introduction for complex semisimple Lie algebras. In \cite{AP}, Arnal and Pinczon defined Whittaker modules for $\mathfrak{sl}_2$.  In \cite{K}, Kostant defined Whittaker modules for all complex semisimple Lie algebras, and showed that for a non-singular Whittaker map, simple Whittaker modules are in bijective correspondence with maximal ideals of the center of the universal enveloping algebra of a finite-dimensional simple Lie algebra. 
Whittaker modules were studied for various algebras, e.g. 
for quantum groups in [\cite{O}, \cite{S}, \cite{XGZ}, \cite{XZ}]; for Virasoro algebras in [\cite{OW}, \cite{OW1}, \cite{GLZ}, \cite{LPX}, \cite{LWZ}]; for Heisenberg algebra in \cite{C}; for Witt type Lie algebras in \cite{ZL}; for Weyl algebras and generalized Weyl algebras in [\cite{BO}, \cite{CW}]; for affine Lie algebras in [\cite{ALZ}, \cite{CGLW}, \cite{GL}, \cite{CJ}]; for classical Lie superalgebras in \cite{Ch}. In \cite{BM}, the authors studied Whittaker modules in a more general setting for algebras with  a triangular decomposition. 
\medskip

In \cite{CSS} the authors introdued BGG category, classified simple objects and studied homological properties there. In this paper we consider Whittaker modules for Lie superalgebras $W_{m,n}$. Previously, such  modules were classified for $W_{m,0}$ in \cite{ZL}. 

\medskip
 
 Let $A_{m,n}$ be the tensor product of polynomial algebra in $m$-variables $\C[t_1,\dots, t_m]$ and supercommutative exterior algebra $\Lambda(n)$ in $n$ odd variable $\xi_1\dots, \xi_n$ over the complex numbers $\C$. Let $W_{m,n}$ be the space of all superderivations of $A_{m,n}$, which is a Lie superalgebra. The subspaces $\mathfrak{h}_{m,n}:=\text{Span}\{t_i\frac{\partial}{\partial t_i},\; \xi_j\frac{\partial}{\partial \xi_j}: 1\leq i\leq m,\; 1\leq j\leq n\}$ and $\triangle_{m,n}=\text{Span}\{\frac{\partial}{\partial t_i}, \frac{\partial}{\partial \xi_j}: 1\leq i\leq m,\; 1\leq j\leq n\}$ are two commutative subalgebras of $W_{m,n}$. A $W_{m,n}$ module $M$ is called a weight module if $\mathfrak{h}_{m,n}$ is diagonalizable on $M$. Weight modules with finite-dimensional weight spaces are called Harish-Chandra Modules.  Tensor $W_{m,n}$-modules $F(P,M)$ were constructed in \cite{XW} 
 for any simple module $P$ over the Weyl superalgebra and any simple modules $M$ over the general linear Lie superalgebra. 
 In particular, all simple bounded  modules were classified. Harish-Chandra modules for $W_{m,0}$ are classified in \cite{GS},  every such nonzero module is eiher module of tensor fields or a simple subquotient of a tensor module. Harish-Chandra modules Lie superalgebras of $W$ type were classified in \cite{BFIK}, \cite{XL} (see also \cite{BF} for the Lie algebra case). \\

 Let $\widetilde{W}_{m,n}=A_{m,n}\rtimes W_{m,n}$ be the extended Witt superalgebra. We say that a $\widetilde{W}_{m,n}$-module $M$ is a $(AW)_{m,n}$ module if the action of $A_{m,n}$ is associative.
 We see that $(W_{m,n},\triangle_{m,n})$ and $(\widetilde{W}_{m,n},\triangle_{m,n})$ are Whittaker pairs in the sense of \cite{BM} (in the  super case). A $W_{m,n}$ (or $(AW)_{m,n}$)-module $M$ is called Whittaker module if $\triangle_{m,n}$ acts locally finitely on $M$. For a Whittaker $W_{m,n}$-module $M$, a vector $v\in M$ is called a Whittaker vector if $\frac{\partial}{\partial t_i}v=a_iv$ for some ${\mathbf a}=(a_1, \ldots, a_m)\in \C^m$, $1\leq i\leq m$, and $\frac{\partial}{\partial \xi_j}v=0$, $1\leq j\leq n$. Denote by $\Omega_a^{W_{m,n}}$ (resp. $\Omega_{\mathbf a}^{ {AW}_{m,n}}$)  the category of $W_{m,n}$ (resp. $(AW)_{m,n}$ )-modules on which $\frac{\partial}{\partial t_i}-a_i,\; \frac{\partial}{\partial \xi_j}$ acts locally nilpotently for any $i\in \{1,\dots, m\}$ and $j\in \{1,\dots n\}.$ Modules of these categories are Whittaker modules of type ${\mathbf a}$.

 A vector $\mathbf{a}\in \C^m$ will be called  non-singular if $a_i\neq 0$ for all $i\in \{1,2,\dots ,m\}$.
 In this paper we will classify all simple objects in $\Omega_{\mathbf{a}}^{ AW_{m,n}}$ and $\Omega_{\mathbf{a}}^{W_{m,n}}$ for non-singular ${\mathbf a}\in \C^m$. We call the corresponding Whittaker modules non-singular.

 The paper is organized is as follows. In Section \ref{NP} we recall  definitions of  Cartan Lie superalgebras of W type, Weyl superalgebras and also the important concept of the weighting functor which sends any $W_{m,n}$-module (resp. $(AW)_{m,n}$-module)  to a weight  $W_{m,n}$-module (resp. $(AW)_{m,n}$-module). We introduce the notion of Whittaker modules for Witt superalgebras and Weyl superalgebras. In particular, we define the universal Whittaker modules for Witt Lie superalgebras. 
 In Section \ref{KR} we  review some important results from \cite{XL} and \cite{LX}, which play crucial role in our paper. In Section \ref{WM}, we define an $(AW)_{m,n}$-module structure on $T(A^{\mathbf a},V)$  for any $\mathfrak{gl}(m,n)$-module $V$, then we prove that any simple non-singular Whittaker module for $(AW)_{m,n}$  is of the form $T(A^{\mathbf a},V)$ for some finite-dimensional simple $\mathfrak{gl}(m,n)$-module $V$. In Section \ref{W}, given any Whittaker module over a Witt superalgebra of non-singular type, we define a Whittaker module over the extended Witt superalgebra of the same type, which is known as the cover of the original module. Using this cover we prove our main result (Theorem \ref{MT}):

 \begin{theo}
     Any simple non-singular Whittaker module  for the Witt superalgebra $W_{m,n}$  is isomorphic to a simple quotient of the module of  tensor fields $T(A^{\mathbf a}, V)$ for some finite-dimensional simple $\mathfrak{gl}(m,n)$-module $V$ and some $\mathbf{a}=(a_1,a_2,\dots, a_m)\in (\C^\times)^m$.
 \end{theo}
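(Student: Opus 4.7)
The plan is to invoke the covering technique developed in Section~\ref{W}, which reduces the classification of simple non-singular Whittaker $W_{m,n}$-modules to the analogous classification for the extended algebra $\widetilde{W}_{m,n}$ established in Section~\ref{WM}. Concretely, I would start with a simple $M\in\Omega_{\mathbf a}^{W_{m,n}}$ with finite-dimensional Whittaker vector space, construct its cover $\widehat{M}\in\Omega_{\mathbf a}^{\widetilde{W}_{m,n}}$, and then identify $M$ with a simple $W_{m,n}$-subquotient of a composition factor of $\widehat{M}$.

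The central observation enabling the lift is that, because $\mathbf a$ is non-singular, each operator $\frac{\partial}{\partial t_i}$ acts on $M$ as $a_i$ plus a locally nilpotent perturbation, hence is invertible; moreover the odd derivations $\frac{\partial}{\partial \xi_j}$ are locally nilpotent. These invertibility/nilpotency features are exactly what is needed to define a well-defined, associative $A_{m,n}$-action that extends the $W_{m,n}$-action through the semidirect product $\widetilde{W}_{m,n}=A_{m,n}\rtimes W_{m,n}$: the actions of the generators $t_i$ and $\xi_j$ on $\widehat M$ can be pinned down from the commutation relations with $\frac{\partial}{\partial t_i}$ and $\frac{\partial}{\partial\xi_j}$, and the non-singular/nilpotent structure makes the resulting formulas consistent. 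I would then verify that $\widehat M$ sits in $\Omega_{\mathbf a}^{\widetilde{W}_{m,n}}$, that its Whittaker vector space is still finite-dimensional, and that there is a canonical $W_{m,n}$-equivariant surjection $\widehat M\twoheadrightarrow M$ whose kernel we control.

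With the cover in hand, the classification from Section~\ref{WM} applies: every simple object of $\Omega_{\mathbf a}^{\widetilde{W}_{m,n}}$ is of the form $T(A^{\mathbf a},V)$ for some finite-dimensional simple $\mathfrak{gl}(m,n)$-module $V$. Hence the composition factors of $\widehat M$ as an $(AW)_{m,n}$-module are a finite list of such tensor field modules $T(A^{\mathbf a},V_k)$. Restricting the corresponding composition series to $W_{m,n}$ and pushing it along the cover map $\widehat M\twoheadrightarrow M$, the simple $W_{m,n}$-module $M$ must appear as a simple $W_{m,n}$-subquotient of one of the restrictions $T(A^{\mathbf a},V_k)\big|_{W_{m,n}}$, which is exactly the statement of the theorem.

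The main obstacle is the construction of the cover and the verification of its properties: one needs an $A_{m,n}$-action that is associative and super-commutative in the polynomial and exterior generators simultaneously, compatible with the action of $W_{m,n}$ via the semidirect product bracket, and that respects the Whittaker type $\mathbf a$. A secondary difficulty is controlling finite-dimensionality of the Whittaker vector space of $\widehat M$ (so that the classification in Section~\ref{WM} really applies), and ensuring that the $W_{m,n}$-composition factor corresponding to $M$ is indeed a subquotient of a single $T(A^{\mathbf a},V_k)$ rather than something glued across several factors. Both issues should be manageable by exploiting the non-singularity of $\mathbf a$, which makes the relevant operators act invertibly and keeps the cover construction as rigid as possible.
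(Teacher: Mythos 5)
Your overall strategy --- lift $M$ to a Whittaker module over $\widetilde{W}_{m,n}$ via a cover, classify the latter using the equivalence with finite-dimensional $T$-modules, and recover $M$ as a subquotient of some $T(A^{\mathbf a},V)$ --- is the same as the paper's. However, two of the steps you describe do not work as stated. First, you propose to obtain the $A_{m,n}$-action by exploiting that each $\frac{\partial}{\partial t_i}$ acts invertibly on $M$ (true for non-singular $\mathbf a$) and then solving the commutation relations for the actions of $t_i$ and $\xi_j$. That would amount to putting an associative $A_{m,n}$-action on $M$ itself, which is in general impossible: the content of the theorem is precisely that $M$ is a possibly \emph{proper} quotient of $T(A^{\mathbf a},V)$, and such quotients need not carry any $AW$-structure. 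The paper's cover lives on a different space, $\widehat M=(W\otimes M)/K(M)$, where $A$ acts on the left tensor factor by $y\cdot(a\otimes b)=ya\otimes b$, $\theta(a\otimes b)=ab$, and $K(M)$ is the largest $A$-stable subspace of $\ker\theta$; no inversion of the $\partial_i$ is involved and associativity is built in.

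Second, the point you defer as a ``secondary difficulty'' --- finite-dimensionality of $Wh_{\mathbf a}(\widehat M)$, which is what licenses the appeal to Theorem \ref{EXW} --- is in fact the technical heart of the argument (Theorem \ref{hat{M}}), and it does not follow from invertibility considerations. The paper's route is: non-singularity gives that $M$ is a finite-rank free $\mathcal{U}(\mathfrak{h}_{m,0})$-module (Lemma \ref{CartanF}, this is where non-singularity genuinely enters); applying the weighting functor yields a uniformly bounded weight module $\mathfrak{W}(M)$; Theorem \ref{UBM} then gives the annihilation identities $w^{r,j,\partial,\partial^\prime}_{\alpha,\beta,I,J}\cdot M=0$; and an induction on $|p|$ using these identities shows $\widehat M$ is finitely generated over $\mathcal{U}(\mathfrak{h}_{m,0})$, hence has finite-dimensional generalized Whittaker space. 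Without an argument of this kind your plan does not close. Your final step (take a composition series of $\widehat M$ and the first term surjecting onto $M$) is a legitimate alternative to the paper's minimality-of-$\dim Wh_{\mathbf a}$ argument and does resolve your worry about $M$ being ``glued across factors''; that part is fine.
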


  Together with the results of \cite{XW}, the theorem provides a classification of all simple non-singular Whittaker $W_{m,n}$-modules.

    \section{Notations and Preliminaries}\label{NP}

	We denote $\Z,\; \Z_+,\; \N,\; \Q,\; \R,\; \C$ by the set of integers, positive integers, natural numbers, rational numbers and real numbers respectively. All vector spaces and algebras are taken over $\C$. \\ 
 A vector superspace $V$ is a vector space together with a $\Z_2$-gradation, $V=V_{\bar{0}}\oplus V_{\bar{1}}$. The parity of a homogeneous element $a\in V_i$ is denoted by $|a|=i,\; i\in \Z_2$. An element in $V_{\bar{0}}$ is called even and an element in $V_{\bar{1}}$ is called odd. \\ 
	 A superalgebra $\mathbf{A}$, is a vector superspace $\mathbf{A}=A_{\bar{0}}\oplus A_{\bar{1}}$  equipped with a bilinear multiplication satisfying $A_iA_j\subset A_{i+j}$ for $i,j\in \Z_2$. All subalgebras and ideals are understood in $\Z_2$-graded sense. Also, a module $M$ over a superalgebra $A$ is also always understood in $\Z_2$-graded sense.    
	 Any algebra can be thought of as superalgebra by assuming its odd part as zero.

\medskip

    A Lie superalgebra $\mathfrak{g}$ is a $\mathbb Z_2$-graded space $\mathfrak{g}=\mathfrak{g}_{\bar{0}}\oplus \mathfrak{g}_{\bar{1}}$ with a bilinear multiplication $[-,-]$ satisfying:
		\begin{enumerate}
			\item $[a,b]=-(-1)^{|a||b|}[b,a]$,
			\item $[a,[b,c]]=[[a,b],c]+(-1)^{|a||b|}[b,[a,c]],$
		\end{enumerate}
		for all homogeneous elements $a,b,c\in \mathfrak{g}$.

        \medskip

    A module $M$ over an associative superalgebra or Lie superalgebra $\mathfrak{g}$ is said to be strictly simple if it does not have any $\mathfrak{g}$ invariant subspaces (need not be $\Z_2$-graded) except $0$ and $M$. It is easy to see that any strictly simple module is always simple. 

  \subsection{The general linear Lie superalgebra}
Let $\mathfrak{gl}(m,n)=\mathfrak{gl}(m,n)_{\bar{0}} \oplus \mathfrak{gl}(m,n)_{\bar{1}}$ be the general linear Lie superalgebra consisting of all $(m+n)\times (m+n)$ complex matrices of the block form
\begin{equation}\label{sm}
  \begin{pmatrix}
      a & b\\
      c & d
  \end{pmatrix},  
\end{equation}
where $a,\; b,\; c$ and $d$ are respectively $m\times m,\; m\times n,\; n\times m$ and $n\times n$ matrices. The even subalgebra $\mathfrak{gl}(m,n)_{\bar{0}}$ consists of matrices of the form (\ref{sm}) with $b=c=0$ and the odd subspace $\mathfrak{gl}(m,n)_{\bar{1}}$ consists of matrices of the form \ref{sm} with $a=d=0$. In particular
$$\mathfrak{gl}(m,n)_{\bar{0}}=\text{span}\{E_{i,j}: i,j\in \{1,2,\dots, m\}\; \text{or}\; i,j \in \{m+1,\dots, m+n\}\}$$ and 
 $$\mathfrak{gl}(m,n)_{\bar{1}}=\text{span}\{E_{i,m+j}, E_{m+j,i}: i,\in \{1,2,\dots, m\} \;\text{and}\; j \in \{1,\dots, n\}\}$$

	\subsection{Witt Superalgebra} 
 Let $A_{m,n}$ be the tensor superalgebra of polynomial algebra $\C[t_1,\dots t_n]$ in $m$ even variables $t_1,\dots, t_n$ and exterior algebra  $\Lambda(n)$ in $n$ odd variables $\xi_1,\dots ,\xi_n$.

    \medskip
	
	For $\alpha=(\alpha_1,\dots, \alpha_m)\in \Z_+^m$, we write $t^\alpha:=t_1^{\alpha_1}\dots t_m^{\alpha_m}$ and for $i_1,\dots i_k\in \{\bar{1},\cdots ,\bar{n}\},$ we write $\xi_{i_1,\dots, i_k}=\xi_{i_1}\xi_{i_2}\cdots \xi_{i_k} $. Also for any $I=\{i_1,\dots, i_k\}\subset \bar{n}$, we denote $\underline{I}=(l_1,\dots, l_k)$ if $\{i_1,\dots, i_k\}=\{l_1,\dots, l_k\}$ and $l_1<\dots < l_k$. We denote $\xi_{I}:=\xi_{l_1,\dots, l_k}$ and for $j\in I$, we define $I(j)=(\xi_j$'s position in $\xi_I)-1$. 

    \medskip
    
	The space of all superderivations $W_{m,n}$ on $A_{m,n}$ forms a Lie superalgebra known as the Witt superalgebra. The Witt superalgebra has a standard basis:
	\[\{t^{\alpha}\xi_I\frac{\partial}{\partial t_i},\; t^{\alpha}\xi_I\frac{\partial}{\partial \xi_j}|\alpha\in \Z^m_+, I\subset \{1,2,\dots, n\},\; 1\leq i\leq m,\; 1\leq j\leq n\}\]
	with the brackets given by:
    
	\[[t^{\alpha}\xi_I\frac{\partial}{\partial t_i},t^{\beta}\xi_J\frac{\partial}{\partial t_j}]=\beta_it^{(\alpha+\beta-e_i)}\xi_I\xi_J\frac{\partial}{\partial t_j}-\alpha_jt^{(\alpha+\beta-e_j)}\xi_I\xi_J\frac{\partial}{\partial t_i}\]
    
	\[[t^{\alpha}\xi_I\frac{\partial}{\partial t_i},t^{\beta}\xi_J\frac{\partial}{\partial \xi_j}]=\beta_it^{(\alpha+\beta-e_i)}\xi_I\xi_J\frac{\partial}{\partial \xi_j}-(-1)^{|I|(|J|-1)}(-1)^{I(j)}\delta_{j,I}\xi_{J}\xi_{I\setminus{j}}\frac{\partial}{\partial t_i}\]
    
	\[[t^{\alpha}\xi_I\frac{\partial}{\partial \xi_i},t^{\beta}\xi_J\frac{\partial}{\partial \xi_j}]=(-1)^{J(i)}\delta_{i,J}t^{\alpha+\beta}\xi_I\xi_{J\setminus\{i\}}\frac{\partial}{\partial \xi_j}-(-1)^{I(j)}(-1)^{(|I|-1)(|J|-1)}\delta_{j,I}\xi_J\xi_{I\setminus \{j\}}\frac{\partial}{\partial \xi_i}.\]
    
	We see that $A_{m,n}$ is a $W_{m,n}$-module under the super derivation action of $W_{m,n}$. Denote the Lie superalgebra $W_{m,n} \ltimes A_{m,n}$ by $\widetilde{W_{m,n}}$.
    
\begin{defi}
    A  $\widetilde{W_{m,n}}$ module $M$ is called $(AW)_{m,n}$ module, if $A_{m,n}$ acts associatively on $M$.
 \end{defi}
    \medskip
    
	We  will write $\triangle_{m,0}=\text{Span}\{\frac{\partial}{\partial t_1}, \dots, \frac{\partial}{\partial t_m}\}$, $\triangle_{0,n}=\text{Span}\{\frac{\partial}{\partial \xi_1},\dots ,\frac{\partial}{\partial \xi_n}\}$,  $\mathfrak{h}_{m,0}=\text{Span}\{t_i\frac{\partial}{\partial t_i}: 1\leq i\leq m\}$ and $\mathfrak{h}_{0,n}=\text{Span}\{\xi_j\frac{\partial}{\partial \xi_j}: 1\leq j\leq n\}$ and $\mathfrak{h}_{m,n}=\mathfrak{h}_{m,0}\oplus \mathfrak{h}_{0,n}$.

     We see that $W_{m,n}$ is ad-diagonalizable with respect to $\mathfrak{h}_{m,n}$.

\medskip

	\subsection{Weyl superalgebra}
	The Weyl superalgebra $K_{m,n}$ is the complex simple associative superalgebra generated by
	$t_1,\dots ,t_m,\xi_1,\dots ,\xi_n,\frac{\partial}{\partial t_1},\dots , \frac{\partial}{\partial t_m},\frac{\partial}{\partial \xi_1},\dots \frac{\partial}{\partial \xi_n}$ subject to the relations: 
	$[\frac{\partial}{\partial t_i},\frac{\partial}{\partial t_j}]=[t_i,t_j]=[t_i,\xi_k]=[\frac{\partial}{\partial \xi_k},\frac{\partial}{\partial \xi_l}]=[\frac{\partial}{\partial t_i},\frac{\partial}{\partial \xi_k}]=0$, $\frac{\partial}{\partial t_i}.t_j-t_j.\frac{\partial}{\partial t_i}=\delta_{i,j},$ $\xi_k.\xi_l=-\xi_l\xi_k,\; \frac{\partial}{\partial \xi_k}.\xi_l+\xi_l.\frac{\partial}{\partial \xi_k}=\delta_{k,l}$ for $1\leq i,j\leq m,\; 1\leq k,l\leq n$.
	
	\subsection{Weight modules}
     A $W_{m,n}$-module $M$ 
  is said to be a weight module if $M=\oplus_{\lambda \in \C^m }M_{\lambda}$, where $M_{\lambda}=\{v\in M: t_i\frac{\partial}{\partial t_i}.v=\lambda_i v, : 1\leq i\leq m\}$.
	A $W_{m,n}$-module $M$ is called a strong weight module if $M=\oplus_{\lambda \in \C^m,\,
 \mu \in \C^n}M_{(\lambda,\; \mu)}$, where $M_{(\lambda,\; \mu)}=\{v\in M: t_i\frac{\partial}{\partial t_i}.v=\lambda_i v,\; \xi_j\frac{\partial}{\partial \xi_j}v=\mu_jv: 1\leq i\leq m,\; 1\leq j\leq n\}$. We note that in \cite{LX} the weight modules are equipped with a diagonalizable action of $\mathfrak{h}_{m,n}$, while our weight modules have a diagonalizable action of $\mathfrak{h}_{m,0}$.

	\subsection{Weighting functor}
	In this subsection we recall the notion of the weighting functor introduced in \cite{JN} and  \cite{ZL}. For any $r\in \C^m$, suppose that $I_r$ is the maximal ideal of $\mathcal{U}(\C \;\mathfrak{h}_{m,0})$ generated by 
	$$h_1-r_1,\dots ,h_m-r_m.$$
	For any $W_{m,n}$-module $M$ and $r\in \C^m$, set $M^r:=M/I_rM$ and denote
	$$\mathfrak{W}(M)=\bigoplus_{r\in \Z^m}M^r.$$
	
	\begin{prop}
		The space $\mathfrak{W}(M)$ becomes a weight $W-$module under the following action:
		$$t^r\xi_I\frac{\partial}{\partial t_i}.(v+I_sM):=t^r\xi_I \frac{\partial}{\partial t_i}v+I_{r+s-e_i}M,$$
		$$t^r\xi_I\frac{\partial}{\partial \xi_j}.(v+I_sM):=t^r\xi_I\frac{\partial}{\partial \xi_j}v+I_{r+s}M,$$
		where $v\in M,\; r\in \Z_+^m,\; I\subseteq \{1,2,\dots, n\},\; s\in \Z^m,\; 1\leq i\leq m, 1\leq j\leq n.$
	\end{prop}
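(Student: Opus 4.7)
The plan is to verify three things in order: (i) the two displayed formulas give well-defined linear maps on the quotients $M^s=M/I_sM$; (ii) these maps satisfy the bracket relations of $W_{m,n}$; (iii) the resulting module is a weight module in the sense of Section~\ref{NP}.

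The crux is well-definedness. Fix a generator $X$ of either type $t^r\xi_I\partial/\partial t_i$ or $t^r\xi_I\partial/\partial\xi_j$. Since $I_s$ is generated by $h_1-s_1,\dots,h_m-s_m$, every element of $I_sM$ is a finite sum of expressions of the form $(h_k-s_k)w$, and it suffices to verify that $X(h_k-s_k)w$ lies in $I_{r+s-e_i}M$ (respectively $I_{r+s}M$). The key input is the commutator identity $[h_k,X]=c_k(X)\,X$, where the scalar $c_k(X)$ equals $r_k-\delta_{ik}$ in the first case and $r_k$ in the second; both formulas follow immediately from the three bracket relations for $W_{m,n}$ recorded in Section~\ref{NP}. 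Pushing $h_k$ past $X$ and collecting terms,
\begin{equation*}
X(h_k-s_k)w=(h_k-s_k-c_k(X))\,Xw=(h_k-(s+\mathrm{wt}(X))_k)\,Xw,
\end{equation*}
and the right-hand side lies in the target ideal submodule since $\mathrm{wt}(X)$ equals $r-e_i$ or $r$ as appropriate.

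The bracket relations pass to $\mathfrak{W}(M)$ essentially formally: the action on a coset $v+I_sM$ lands in the component $M^{s+\mathrm{wt}(X)}$, and because $\mathrm{wt}[X_1,X_2]=\mathrm{wt}(X_1)+\mathrm{wt}(X_2)$, both sides of the super-Jacobi identity project to $M^{s+\mathrm{wt}(X_1)+\mathrm{wt}(X_2)}$, where they agree by virtue of the underlying identity in $M$. For the weight module property, specialise the first formula to $X=t_i\partial/\partial t_i$ (so $r=e_i$, $I=\emptyset$): it gives $h_i\cdot(v+I_sM)=h_iv+I_sM$, and $(h_i-s_i)v\in I_sM$ by the very definition of $I_s$, so $h_i$ acts as the scalar $s_i$ on $M^s$. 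Hence $\mathfrak{W}(M)=\bigoplus_{s\in\Z^m}M^s$ is diagonalisable under $\mathfrak{h}_{m,0}$ with weight of $M^s$ equal to $s$.

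The only step that is not pure bookkeeping is the commutator identity $[h_k,X]=c_k(X)\,X$ used in well-definedness; once that is in hand, everything else reduces to the fact that the formulas are the ambient $W_{m,n}$-action composed with projection to the correct graded piece.
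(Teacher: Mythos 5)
Your proof is correct. The paper itself offers no argument for this proposition---it is recalled from the literature on the weighting functor (\cite{JN}, \cite{ZL})---so there is nothing to compare against; your argument is the standard one. The key computation $[h_k,\,t^r\xi_I\tfrac{\partial}{\partial t_i}]=(r_k-\delta_{ik})\,t^r\xi_I\tfrac{\partial}{\partial t_i}$ and $[h_k,\,t^r\xi_I\tfrac{\partial}{\partial \xi_j}]=r_k\,t^r\xi_I\tfrac{\partial}{\partial \xi_j}$ is exactly what makes the shifts $s\mapsto r+s-e_i$ and $s\mapsto r+s$ the right ones, your reduction of $I_sM$ to sums $(h_k-s_k)w$ is legitimate because $\mathcal{U}(\mathfrak{h}_{m,0})$ is commutative, and the verification of the bracket relations and of diagonalizability of $\mathfrak{h}_{m,0}$ (the paper's notion of weight module requires only this, not all of $\mathfrak{h}_{m,n}$) is complete.
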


 \subsection{Whittaker modules}
 
 For any Lie (super)algebra $\mathfrak{g}$ and a Lie (super)subalgebra $\eta$ of $\mathfrak{g}$, we say that $(\mathfrak{g},\eta)$ is a Whittaker pair \cite{BM} if $\eta$ is a quasi-nilpotent subalgebra of  $\mathfrak{g}$ {(i.e  $\cap_{k=0}^{\infty}  
 \eta_k=0$, where $\eta_0:=\eta$ and $\eta_k:=[\eta_{k-1},\eta]$) and $\eta$ acts (by adjoint action) locally nilpotently on $\mathfrak{g}/\eta.$ 
 We see that $(W_{m,n}, \triangle_{m,n})$ and $(\widetilde{W}_{m,n}, \triangle_{m,n})$ are Whittaker pairs.  
 For any Whittaker pair $(\mathfrak{g},\mathfrak{\eta})$, a $\mathfrak{g}$-module $M$ is called Whittaker module if $\mathfrak{\eta}$ acts locally finitely on M. Denote by $\mathcal{W}_{\mathfrak{g}}^{\mathfrak{\eta}}$  the category of all Whittaker modules for the pair $(\mathfrak{g},\eta)$.

 \medskip

 For a Whittaker pair $(\mathfrak{g}, \mathfrak a)$ and for a Lie superalgebra homomorphism $\phi:\mathfrak a  \rightarrow \C$, we define $M_{\phi}=\mathcal{U}(\mathfrak{g})\otimes_{\mathcal{U}(\mathfrak a)} \C_{\phi}$, where $\C_{\phi}$ is the one dimensional module of $\mathfrak a$ given by $\phi$. We see that $M_{\phi}$ is a Whittaker module. 

 \medskip
 
 A $W_{m,n}$-module $M$ is called a Whittaker module if the action of $\triangle_{m,n}$ on $M$ is locally finite.
	
	For any $\mathbf{a}=(a_1,a_2,\dots, a_m)\in \C^m$, we  define a Lie superalgebra homomorphism $\phi_{\mathbf{a}}:\triangle_{m,n} \rightarrow \C$ such that $\phi_{\mathbf{a}}(\frac{\partial}{\partial t_i})=a_i$ and $\phi_{\mathbf{a}}(\frac{\partial}{\partial \xi_j})=0$  for any $1\leq i\leq m$ and $1\leq j\leq n$. 
	A Whittaker module $M$ is of type $\phi_{\mathbf{a}}$ (or simply of type ${\mathbf a}$) if for any $v\in M$ there exists $k\in \N$ such that $(x-\phi_{\mathbf{a}}(x))^kv=0$ for all $x\in \triangle_{m,n}$. Define the subspace $\text{Wh}_{\mathbf{a}}(M)=\{v\in M| xv=\phi_{\mathbf{a}}(x)v,\; \forall \;x\in \triangle_{m,n}\}$ of $M.$ An element of $\text{Wh}_{\mathbf{a}}(M)$ is called a Whittaker vector.
	Also define $\widetilde{\text{Wh}_{\mathbf{a}}(M)}
	=\{v\in M: \frac{\partial}{\partial t_i} v=a_iv; 1\leq i\leq m\}$, elements of this space are called generalized Whittaker vectors.

    \medskip
    
	Denote by $\Omega_{\mathbf{a}}^{W_{m,n}}$  the subcategory consisting of  Whittaker $W_{m,n}$-modules $M$ of type $\phi_{\mathbf{a}}$ such that $\text{dim} \text{Wh}_{\mathbf{a}}(M) <\infty. $ Similarly, we define Whittaker $(AW)_{m,n}$-modules of type $\phi_{\mathbf{a}}$. Let $\Omega_{\mathbf{a}}^{ AW_{m,n}}$ be the category consisting of Whittaker $(AW)_{m,n}$-modules $M$ of type $\phi_{\mathbf{a}}$ such that $\text{dim} \text{Wh}_{\mathbf{a}}(M)<\infty$.

	\section{Known Results}\label{KR}
 In this section we shall collect some important technical results which will play a crucial role in the description of simple Whittaker modules for the extended Witt algebra $(AW)_{m,n}$.
 
  A module over an associative superalgebra (or Lie superalgebra) is simple if it does not contain any non-zero proper $\Z_2$-graded submodules. A module M over an associative algebra (or Lie superalgebra) $A$  is called strictly simple if it does not have any $A$-invariant subspaces (need not be $\Z_2$-graded) except $0$ and $M$.
  
  We recall the following result  on tensor modules over tensor superalgebras (\cite[Lemma 2.2]{XL}), see also \cite{CFR}.
	\begin{lemm}
		Suppose $B, B^\prime$ be any two unital associative superalgebras with countable basis for $B^\prime, R=B\otimes B^\prime$, $M^\prime$ be a strictly simple $B^\prime$-module and $M$ be a $B$-module Then
		  $M\otimes M^\prime$ is simple $R$ module if and only if $M$ is simple $B$-module.  
		 \end{lemm}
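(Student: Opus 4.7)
The plan is to establish the two directions separately, with the forward direction being the substantial one.

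For the easy direction, assume $M \otimes M'$ is simple as $R$-module, and suppose for contradiction that $N \subsetneq M$ is a nonzero graded $B$-submodule. Then $N \otimes M'$ is a nonzero $R$-submodule of $M \otimes M'$, and it is proper because picking a vector in $M \setminus N$ together with any nonzero element of $M'$ gives a tensor outside $N \otimes M'$. This contradicts simplicity of $M \otimes M'$.

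For the harder direction, assume $M$ is simple over $B$. The key tool is the Jacobson density theorem applied to $M'$. Since $B'$ has countable basis and $M'$ is cyclic (being simple), $M'$ is countable-dimensional over $\C$, so Dixmier's lemma yields $\mathrm{End}_{B'}(M') = \C$. Strict simplicity then permits the ordinary (ungraded) density statement: for any linearly independent $w_1, \dots, w_k \in M'$ and any prescribed $u_1, \dots, u_k \in M'$ there exists $b' \in B'$ with $b' w_i = u_i$ for all $i$. By splitting $b'$ into even and odd parts I can further arrange $b'$ to be even whenever the $w_i$ and $u_i$ are homogeneous of matching parities.

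Granting density, let $N \subseteq M \otimes M'$ be a nonzero $R$-submodule, pick a homogeneous $0 \neq z \in N$, and write $z = \sum_{i=1}^{k} v_i \otimes w_i$ with all $v_i, w_i$ homogeneous and $k$ minimal. Minimality forces both $\{v_i\}$ and $\{w_i\}$ to be linearly independent. Choosing an even $b' \in B'$ with $b' w_1 = w_1$ and $b' w_i = 0$ for $i \geq 2$, the action of $1 \otimes b'$ on $z$ produces $v_1 \otimes w_1 \in N$ with no sign issues; if $k > 1$ this contradicts minimality, so $k = 1$. Acting then on the decomposable tensor $v_1 \otimes w_1$ by $1 \otimes B'$ and invoking simplicity of $M'$ gives $v_1 \otimes M' \subseteq N$; acting further by $B \otimes 1$ and using $Bv_1 = M$ (since $v_1$ is a nonzero homogeneous vector in the simple graded $B$-module $M$, so $Bv_1$ is a nonzero graded submodule) yields $M \otimes M' \subseteq N$.

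The main obstacle is the $\Z_2$-graded bookkeeping: I must verify that the minimal expression can be chosen with all factors homogeneous of matching parities, that density genuinely lets me produce an even $b'$ with the desired action on the $w_i$, and that the Koszul signs that would otherwise appear when $1 \otimes b'$ acts through $M$ are neutralized by this choice. Strict simplicity of $M'$ (as opposed to graded simplicity) is essential here, as it is precisely what validates the ungraded Jacobson density input that the argument rests on.
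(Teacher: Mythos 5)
Your argument is correct and is the standard one; note that the paper itself gives no proof of this lemma, quoting it as \cite[Lemma 2.2]{XL}, and the proof there (and in \cite{CFR}) runs along exactly the same lines: Dixmier's lemma plus Jacobson density on the strictly simple, countable-dimensional factor $M'$, followed by shortening a tensor expression of an element of a nonzero submodule to a pure tensor and then saturating with $B\otimes 1$ and $1\otimes B'$. Your handling of the $\Z_2$-grading (homogeneous decomposition of $z$, replacing $b'$ by its even part when the targets have matching parities) is sound; the only cosmetic slip is the phrase ``this contradicts minimality'' --- obtaining $v_1\otimes w_1\in N$ does not contradict anything and minimality is not actually needed once the $w_i$ are linearly independent, since the argument proceeds directly from the pure tensor $v_1\otimes w_1\in N$.
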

 
	We will use the notions of \cite{LX} here. 
	For convenience denote $A_{m,n}, W_{m,n}, \widetilde{W}_{m,n},$  $ \triangle_{m,n}, K_{m,n}$, $(AW)_{m,n}$ by $A, W, \widetilde{W},\triangle, K$, $AW$ respectively. Recall that a $\widetilde{W}$-module $M$ is $AW$-module if the action of $A$ on $M$ is associative.

    For any Lie (super)algebra $\mathfrak{g}$, let $\mathcal{U}(\mathfrak{g})$ be the universal enveloping algebra of $\mathfrak{g}$. By the Poincare-Birkhoff-Witt theorem, $\mathcal{U}(\widetilde{W})=\mathcal{U}(A)\mathcal{U}(W)$. Let $\mathcal{J}$ be the left ideal of $\mathcal{U}(\widetilde{W})$ generated by 
	\[\{t^0-1,t^\alpha\xi_I\cdot t^\beta\xi_J-t^{\alpha+\beta}\xi_I\xi_J|\alpha,\;\beta\in \Z^m_+, I,J\subset \{1,\dots, n\}\}\]
	One sees that $\mathcal{J}$ is an ideal of $\mathcal{U}(\widetilde{W})$. Let $\bar{U}$ be the quotient algebra $\mathcal{U}(\widetilde{W})/\mathcal{J}$. Identifying $A$ and $W$ with their images in $\bar{U}$, we see that $\bar{U}=A \mathcal{U}(W)$.
    The subspace $AW$ is a Lie supersubalgebra of $\bar{U}$ with the Lie bracket defined as follows:
    \[[a.x,b.y]=ax(b).y-(-1)^{|a.x||b.y|}by(a).x+(-1)^{|x||b|}ab.[x,y],\]
    where $a,b\in A$ and $x,y\in W$.

    \medskip
	For any $\alpha\in \Z^m_+,\; I\subset \{1,2,\dots, n\},\; \partial \in \{\frac{\partial}{\partial t_1}, \dots \frac{\partial}{\partial t_m}, \frac{\partial}{\partial \xi_1}\dots \frac{\partial}{\partial \xi_n} \}$, let 
	\[X_{\alpha,I, \partial}=\sum_{\substack{0\leq \beta\leq \alpha\\ J\subset I}}(-1)^{|\beta|+|J|+\tau(J,I\setminus J)}\binom{\alpha}{\beta}t^\beta\xi_J\cdot t^{\alpha-\beta}\xi_{I\setminus J}\partial.\]
	Denote $T=\text{Span}\{X_{\alpha,I, \partial}|\alpha\in \Z^m_+,\; I\subseteq \{1,2,\dots, n\},\; \partial \in \{\frac{\partial}{\partial t_1}, \dots \frac{\partial}{\partial t_m}, \frac{\partial}{\partial \xi_1}\dots \frac{\partial}{\partial \xi_n},\; |\alpha|+|I|>0 \}\}$.  Then $T$ is a Lie subsuperalgebra of $AW$ by  \cite[Lemma 3.2]{LX}. Let $\mathfrak{m}$ be the maximal ideal of $A$ generated by $t_1,t_2,\dots, t_m, \xi_1, \dots, \xi_n$. Then $\mathfrak{m}\triangle $ is a Lie subsuperalgebra of $W$. We  recall

	\begin{lemm}\label{SI}
    \begin{itemize}
    \item (\cite[Lemma 3.5]{LX})
		There is an associative superalgebra isomomorphism  $$\pi_1:K \otimes \mathcal{U}(T) \rightarrow \bar{U}$$ given by $\pi_1(x\otimes y)=xy$ for all $x\in K, y\in \mathcal{U}(T)$. 
        \item (\cite[Lemma 3.6]{LX})
  The linear map $\pi_2: \mathfrak{m}\triangle \rightarrow T$ defined by $\pi_2(t^\alpha\xi_I\partial)=X_{\alpha, I, \partial}$, for all $t^\alpha\xi_I\in \mathfrak{m}, \partial\in  \{\frac{\partial}{\partial t_1}, \dots \frac{\partial}{\partial t_m}, \frac{\partial}{\partial \xi_1}\dots \frac{\partial}{\partial \xi_n}\}$,  is an isomorphism of Lie superalgebras.
        \end{itemize}
	\end{lemm}

	From the above lemma we see that $\bar{U}\cong K_{m,n}\otimes \mathcal{U}(\mathfrak{m}\triangle)$.\\
 
	\begin{lemm}[\cite{LX}, Lemma 3.7]
		\begin{enumerate}
		   
			\item $\mathfrak{m}\triangle/\mathfrak{m}^2\triangle\cong \mathfrak{gl}(m,n)$.
			\item Suppose that $V$ is a simple weight $\mathfrak{m}\triangle$-module. Then $\mathfrak{m}^2\triangle V=0$, and  hence $V$ is a simple weight $\mathfrak{gl}(m,n)$-module.
             \end{enumerate}
		 
	\end{lemm}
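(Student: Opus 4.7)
For (1), I would identify the isomorphism explicitly. The quotient $\mathfrak{m}\triangle/\mathfrak{m}^2\triangle$ has a natural basis consisting of the classes of the $(m+n)^2$ ``linear'' generators $t_i\frac{\partial}{\partial t_j}$, $t_i\frac{\partial}{\partial\xi_q}$, $\xi_p\frac{\partial}{\partial t_j}$, $\xi_p\frac{\partial}{\partial\xi_q}$, which matches $\dim\mathfrak{gl}(m,n)$. I would define a linear map $\phi\colon\mathfrak{gl}(m,n)\to\mathfrak{m}\triangle/\mathfrak{m}^2\triangle$ on the standard matrix units with a consistent sign convention, for instance $\phi(E_{ij})\equiv-t_j\frac{\partial}{\partial t_i}\pmod{\mathfrak{m}^2\triangle}$ on the even $m\times m$ block together with analogous choices in the three blocks involving the $\xi$'s. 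That $\phi$ is a Lie-superalgebra homomorphism follows by applying the three bracket formulas from the excerpt: the ``linear'' terms reproduce the $\mathfrak{gl}(m,n)$ relations (e.g.\ $[t_j\frac{\partial}{\partial t_i},t_l\frac{\partial}{\partial t_k}]=\delta_{li}t_j\frac{\partial}{\partial t_k}-\delta_{jk}t_l\frac{\partial}{\partial t_i}$), while every correction term contains a factor of total $t,\xi$-degree $\ge 2$ and hence vanishes modulo $\mathfrak{m}^2\triangle$. Since $\phi$ sends a basis to a basis, it is an isomorphism.

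For (2), the first step is to observe that $\mathfrak{m}^2\triangle$ is an ideal of $\mathfrak{m}\triangle$: inspection of the three bracket formulas shows that bracketing any $\mathfrak{m}\triangle$-element with one of total $t,\xi$-degree $\ge 2$ again produces only terms of total $t,\xi$-degree $\ge 2$. Consequently $V':=\{v\in V\mid\mathfrak{m}^2\triangle\cdot v=0\}$ is an $\mathfrak{m}\triangle$-submodule, since for $x\in\mathfrak{m}\triangle$, $y\in\mathfrak{m}^2\triangle$, $v\in V'$ the identity $y(xv)=[y,x]v\pm x(yv)=[y,x]v$ is zero because $[y,x]\in\mathfrak{m}^2\triangle$ and $v\in V'$. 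Simplicity of $V$ then forces $V'=0$ or $V'=V$.

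The principal obstacle is to exclude $V'=0$. My plan is to exploit the $\Z_{\ge 0}$-grading $\mathfrak{m}\triangle=\bigoplus_{k\ge 0}L_k$ by $d(t^\alpha\xi_I\partial)=|\alpha|+|I|-1$, which is bracket-additive ($[L_i,L_j]\subseteq L_{i+j}$, again by direct inspection of the three bracket formulas) and induced by the Euler derivation $\mathbf{h}'=\sum_i t_i\frac{\partial}{\partial t_i}+\sum_j\xi_j\frac{\partial}{\partial\xi_j}\in L_0$, for which $[\mathbf{h}',x]=kx$ when $x\in L_k$. Since $\mathfrak{h}_{m,0}\subseteq L_0$ acts diagonalisably on $V$ and the mutually commuting operators $\xi_s\frac{\partial}{\partial\xi_s}$ are idempotent, I would follow the line of argument of \cite[Lem.~3.7]{LX} to show that the positive-degree ideal $\mathfrak{m}^2\triangle=\bigoplus_{k\ge 1}L_k$ acts locally nilpotently on weight vectors in a controlled way, producing a nonzero vector annihilated by $\mathfrak{m}^2\triangle$. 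Once $V'\neq 0$ is secured, simplicity forces $V'=V$, i.e.\ $\mathfrak{m}^2\triangle\cdot V=0$, and by part~(1) the induced action of $\mathfrak{m}\triangle/\mathfrak{m}^2\triangle\cong\mathfrak{gl}(m,n)$ presents $V$ as a simple weight $\mathfrak{gl}(m,n)$-module.
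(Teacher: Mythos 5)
First, note that the paper itself gives no proof of this statement: it is quoted from \cite{LX} (Lemma 3.7) as a known result, so your proposal has to be judged on its own merits rather than against an argument in the text. Your part (1) is essentially the standard argument and is fine: $\mathfrak{m}\triangle=L_0\oplus\mathfrak{m}^2\triangle$ with $L_0$ spanned by the linear fields, and the bracket of two linear fields is again linear, so no correction terms even arise. Two small remarks: the map $E_{ab}\mapsto u_a\frac{\partial}{\partial u_b}$ (with $u_1,\dots,u_{m+n}=t_1,\dots,t_m,\xi_1,\dots,\xi_n$) is already a homomorphism, so your negative-transpose convention is an unnecessary complication and in the super setting would have to be the negative \emph{super}transpose, with block-dependent signs; and the fact that $\mathfrak{m}^2\triangle$ is an ideal is needed already in part (1) to make the quotient a Lie superalgebra, though you do verify it later.

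The genuine gap is in part (2), at exactly the point you flag as the ``principal obstacle'': you never actually prove that $V'=\{v\in V\mid \mathfrak{m}^2\triangle\, v=0\}$ is nonzero. Deferring to ``the line of argument of \cite{LX}'' is circular for a proof of the cited lemma, and the mechanism you gesture at --- that $\mathfrak{m}^2\triangle$ acts locally nilpotently on weight vectors --- is neither established nor sufficient: local nilpotency of individual elements of an infinite-dimensional noncommutative ideal does not by itself produce a common annihilated vector. The step can be closed directly from the grading you already introduced, without $V'$ or nilpotency. Since $V$ is a weight module in the sense of \cite{LX} (diagonalizable $\mathfrak{h}_{m,n}$), the Euler element $\mathbf{h}=\sum_i t_i\frac{\partial}{\partial t_i}+\sum_j \xi_j\frac{\partial}{\partial \xi_j}$ acts diagonally on $V$, and $[\mathbf{h},x]=kx$ for $x\in L_k$. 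Pick a nonzero weight vector $v$ with $\mathbf{h}$-eigenvalue $c$; simplicity gives $V=\mathcal{U}(\mathfrak{m}\triangle)v$, and since every element of $\mathfrak{m}\triangle$ has non-negative ad-$\mathbf{h}$ degree, all $\mathbf{h}$-eigenvalues of $V$ lie in $c+\Z_{\geq 0}$, so a minimal eigenvalue $c_0$ exists. The sum $\bigoplus_{k\geq 1}V_{c_0+k}$ of the higher eigenspaces is then a submodule (each $L_j$, $j\geq 0$, maps $V_{c_0+k}$ into $V_{c_0+k+j}$) and is proper because $V_{c_0}\neq 0$, hence it vanishes; thus $V=V_{c_0}$ and $\mathfrak{m}^2\triangle\,V\subseteq\bigoplus_{k\geq 1}V_{c_0+k}=0$, after which part (1) identifies $V$ as a simple weight $\mathfrak{gl}(m,n)$-module. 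Be aware that this argument uses diagonalizability of $\mathbf{h}$, i.e.\ the weight convention of \cite{LX} with $\mathfrak{h}_{m,n}$; if, as elsewhere in this paper, only $\mathfrak{h}_{m,0}$ is assumed to act diagonally, an additional (finite, exterior-algebra) argument is needed to handle the purely odd directions, and your proposal does not address this distinction.
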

	Note that  the above lemma holds for a finite-dimensional simple $\mathfrak{m}\triangle$-module $V$. Therefore,  any finite-dimensional simple module over $T$ is a $\mathfrak{gl}(m,n)$-module.

\section{Whittaker $AW$-modules}\label{WM}
Our  goal now is to describe all simple Whittaker $AW$-modules using the characterization of finite-dimensional simple $T$-modules.

Let ${\mathbf a}\in \mathbb C^m$. 	A $K$-module $M$ is called a Whittaker module of type $\mathbf{a}$ if for any $v\in M$ there exists $k\in M$ such that  $(x-\phi_{\mathbf{a}}(x))^kv=0$ for all $x\in \triangle$. Let $\mathcal{V}_{\mathbf{a}}^{K}$ be the category consisting of finite length Whittaker $K$-modules of type $\mathbf{a}.$\\
	
	Consider   the superalgebra automorphism $\sigma_{\mathbf{a}}$ of $K$ define as follows:
	\[t_i\mapsto t_i,\; \frac{\partial}{\partial t_i}\mapsto \frac{\partial}{\partial t_i}+a_i,\; \xi_j \mapsto \xi_j,\; \frac{\partial}{\partial \xi_j} \mapsto \frac{\partial}{\partial \xi_j}\]
	Now, twist the $K$-module $A_{m,n}$ by $\sigma_{\mathbf{a}}$ and denote the new $K$-module by $A^{\mathbf{a}}$, where the explicit action of $K$ on $A^{\mathbf{a}}$ is given by
	\[X\cdot f(t,\xi)=\sigma_{\mathbf{a}}(X)f(t,\xi).\]
	\begin{lemm}\label{ss}
		\begin{enumerate}
			\item   Any simple module in $\mathcal{V}_{\mathbf{a}}^{K }$ is isomorphic to $A ^{\mathbf{a}}.$
			\item The category $\mathcal{V} ^{K}_{\mathbf{a}}$ is semisimple.
		\end{enumerate}
		
	\end{lemm}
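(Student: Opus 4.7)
The plan is to first identify $A^{\mathbf a}$ as the unique simple object of $\mathcal{V}_{\mathbf a}^K$ by producing a Whittaker vector in any simple module, and then to prove semisimplicity by showing that every self-extension of $A^{\mathbf a}$ splits, which will reduce to a Koszul-type exactness statement for $A$. The key preliminary fact I would record first is that $A^{\mathbf a}$ is simple as a $K$-module: the untwisted polynomial module $A$ is simple by a direct computation (any nonzero element can be reduced to a nonzero scalar by iterated application of $\partial/\partial t_i$ and $\partial/\partial \xi_j$, after which $1$ is cyclic), and since $\sigma_{\mathbf a}$ is an algebra automorphism of $K$ the twist preserves simplicity. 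A standard PBW-basis argument using $K \cong A \otimes \C[\partial/\partial t_1,\dots,\partial/\partial t_m] \otimes \Lambda[\partial/\partial \xi_1,\dots,\partial/\partial \xi_n]$ as vector spaces also shows that the left annihilator of $1 \in A^{\mathbf a}$ in $K$ is precisely the left ideal $L$ generated by the shifted operators $h_i := \partial/\partial t_i - a_i$ and $\theta_j := \partial/\partial \xi_j$, giving $A^{\mathbf a} \cong K/L$.

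For (1), let $M \in \mathcal{V}_{\mathbf a}^K$ be simple. The operators $h_i, \theta_j$ pairwise super-commute and act locally nilpotently on $M$, so applied to any nonzero vector they generate a finite-dimensional stable subspace on which a standard Engel-type argument (induction on the number of operators, using that each commuting nilpotent has nonzero kernel on a stable subspace) yields a common kernel vector $v \neq 0$, that is, a Whittaker vector. Since $L$ annihilates $v$, the assignment $f \mapsto f \cdot v$ defines a nonzero $K$-module homomorphism $A^{\mathbf a} \to M$, which is then an isomorphism because both sides are simple.

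For (2), by (1) the only simple in $\mathcal{V}_{\mathbf a}^K$ is $A^{\mathbf a}$, so semisimplicity of the finite-length category reduces to showing that any extension $0 \to A^{\mathbf a} \to M \to A^{\mathbf a} \to 0$ splits; induction on length then handles the rest. Given such a sequence, I lift the Whittaker generator of the quotient to some $v \in M$ and set $u_i := h_i v \in A^{\mathbf a}$, $z_j := \theta_j v \in A^{\mathbf a}$. Using that $h_i,\theta_j$ super-commute and act on $A^{\mathbf a}$ as the plain derivations $\partial/\partial t_i, \partial/\partial \xi_j$ (the shift cancels after applying $\sigma_{\mathbf a}$), one extracts the compatibility relations
\[\partial u_j/\partial t_i = \partial u_i/\partial t_j, \qquad \partial z_j/\partial t_i = \partial u_i/\partial \xi_j, \qquad \partial z_j/\partial \xi_i + \partial z_i/\partial \xi_j = 0.\]
A polynomial--exterior Poincar\'e lemma then supplies $w \in A^{\mathbf a} = \C[t_1,\dots,t_m] \otimes \Lambda(n)$ with $\partial w/\partial t_i = u_i$ and $\partial w/\partial \xi_j = z_j$, so that $v - w$ is a Whittaker lift of the quotient generator and yields the splitting.

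The main obstacle is establishing this polynomial--exterior Poincar\'e lemma. I would handle it by separating variables: the $t$-dependence is dispatched by classical polynomial integration (using the explicit primitive $t^\alpha \mapsto t^{\alpha+e_i}/(\alpha_i+1)$), and the $\xi$-dependence by the contracting homotopy on $\Lambda(n)$ coming from the identity $\{\xi_j,\partial/\partial \xi_j\} = 1$ on $\Lambda(n)$. Combining these via a K\"unneth/bicomplex argument yields an explicit chain contraction of the relevant complex in positive degree.
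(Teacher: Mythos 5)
Your proposal is correct, and part (1) is essentially the paper's argument (you supply the Engel-type justification for the existence of a Whittaker vector, which the paper merely asserts, and the identification $A^{\mathbf a}\cong K/L$; otherwise identical). For part (2) you make the same key reduction as the paper --- semisimplicity follows once every self-extension $0\to A^{\mathbf a}\to M\to A^{\mathbf a}\to 0$ splits --- but you execute the splitting differently. You push the problem into the submodule: you record the ``derivatives'' $u_i=h_iv$, $z_j=\theta_jv$ of a lift $v$, check the closedness relations, and then integrate via a polynomial--exterior Poincar\'e lemma to produce the correction $w$. The paper instead corrects the lift directly inside $M$: it replaces $m$ by $m-\xi_j\frac{\partial}{\partial \xi_j}m$ for each odd variable and then by the alternating Taylor-type sum $\sum_k \frac{(-1)^k}{k!}t_1^k\frac{\partial^k}{\partial t_1^k}m_1$ for each even variable. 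These are two organizations of the same computation: the paper's explicit corrections are precisely the contracting homotopy you would use to prove your Poincar\'e lemma, applied term by term to the lift. The paper's route is more economical in that it never needs to state or verify the integrability conditions, nor to prove exactness of the super de Rham complex in degree one; your route costs those extra (true and provable) steps but makes the homological mechanism transparent and isolates a reusable lemma. Either way the conclusion and the overall strategy coincide, so there is no gap --- just be sure, if you keep the Poincar\'e-lemma formulation, to actually prove the degree-one exactness for $\C[t_1,\dots,t_m]\otimes\Lambda(n)$ (your separation-of-variables sketch with the homotopies $t^{\alpha}\mapsto t^{\alpha+e_i}/(\alpha_i+1)$ and $\xi_j\partial/\partial\xi_j$ does work), since that is the one ingredient not already present in the paper.
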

	\begin{proof}
		\begin{enumerate}
			\item We see that any submodule of $A^{\mathbf{a}}$ must contain $1$, which generates $A^{\mathbf{a}}.$ So $A^{\mathbf{a}}$ is a simple $ 
			K$-module. Let $V$ be a simple module in $\mathcal{V}_{\mathbf{a}}^{K}$. Then there exists a non-zero $v\in V$ such that $\frac{\partial}{\partial t_i}v=a_iv$ and $\frac{\partial}{\partial \xi_j}v=0$ for all $1\leq i\leq m$ and $1\leq j\leq n$. Then the map $V\rightarrow A^a,\; t^m\xi_Iv\mapsto t^m\xi_I$ defines a $K$-module isomorphism between $V$ and $A^a$. 
			\item Since $A^{\mathbf{a}}$ is equivalent to $A$ up to the superalgebra automorphism $\sigma_{\mathbf{a}}$, we  have\\ $\text{Ext}^1_{K}(A^{\mathbf{a}},A^{\mathbf{a}})\cong \text{Ext}^1_{K}(A,A)$. By the first part of this lemma, it is enough to show that $\text{Ext}^1_{K}(A,A)=0$.  
            
            Therefore, we need to show that every short exact sequence $0\rightarrow A\xrightarrow{f} M\xrightarrow{g} A\rightarrow 0$ splits. 
			Take $m\in M$ such that $g (m)=1$.\\

			\textbf{Claim:} There exists $b\in A$ such that $\partial(m+f(b))=0$ for all $\partial \in \triangle$. \\
			We know that $\partial (m)\in f(A)$ for all $\partial \in \triangle$. Now for any $\frac{\partial}{\partial \xi_j}\in \triangle_{0,n}$, if $\frac{\partial}{\partial \xi_j} m \neq 0$, then it is clear that $m\neq \xi_j\frac{\partial}{\partial \xi_j} m$, otherwise $g(m)=g(\xi_j\frac{\partial}{\partial \xi_j} m)=\xi_j\frac{\partial}{\partial \xi_j} g(m)=\xi_j\frac{\partial}{\partial \xi_j}\cdot 1=0$, a contradiction to the fact $g(m)=1$. Now it is easy to see that $\frac{\partial}{\partial \xi_j} (m-\xi_j\frac{\partial}{\partial \xi_j} m)=0$ and $\xi_j\frac{\partial}{\partial \xi_j} m\in f(A)$. Repeating the process, we can find $b_1\in A $ such that $\partial(m+f(b_1))=0$ for all $\partial\in \triangle_{0,n}$.
			Now instead of taking $m$, let us take $m_1=m+b_1$. We see that $\partial m_1\in f(A),\; \forall \;\partial \in \triangle_{m,0}$ and $\partial m_1=0,\; \forall \partial \in \triangle_{0,n}.$
			Suppose $\frac{\partial}{\partial t_1} m_1\neq 0$ and let $k\in \N$ be the minimal such that $(\frac{\partial^{k+1}}{\partial t_1^{k+1}}) m_1=0.$ Now we see that           
             \[m_1\neq (t_1\frac{\partial}{\partial t_1}m_1-\frac{1}{2!}t_1^2(\frac{\partial^2}{\partial t_1^2})m_1-\cdots + \frac{(-1)^{k-1}}{k!}t_1^k(\frac{\partial^k}{\partial t_1^k})m_1),\] otherwise $g(m_1)=0$, which is a contradiction. 
             
              Now we have the following equality in the Weyl superalgebra
            \[\frac{\partial}{\partial t_1}\cdot t_1^p(\frac{\partial^p}{\partial t_1^p})=pt_1^{p-1}(\frac{\partial^{p}}{\partial t_1^{p}})+t_1^p(\frac{\partial^{p+1}}{\partial t_1^{p+1}}),\]
             which gives us
             \begin{equation}\label{ann}
                 \frac{\partial}{\partial t_1}(m_1-t_1\frac{\partial}{\partial t_1}m_1+\frac{1}{2!}t_1^2(\frac{\partial^2}{\partial t_1^2})m_1-\cdots + \frac{(-1)^k}{k!}t_1^k(\frac{\partial^k}{\partial t_1^k})m_k)=0.
             \end{equation}
           
			Repeating the process, we have our claim and therefore $M\cong A\oplus A$. Hence $\mathcal{V}_{\mathbf{a}}^{K}$ is semisimple. 
 \end{enumerate}  
	\end{proof}
	For any $K$-module $P$ and any $T$-module $V$, the tensor product $T(P,V)=P\otimes V$ becomes an $AW$-module  under the map $\pi_1$. When $V$ is finite-dimensional we see that $T(A^{\mathbf a},V)\in \Omega_{\mathbf a}^{ AW_{m,n}}$ and the Whittaker vector space $Wh_{\mathbf a}(T(A^{\mathbf a},V))=V$.
	
     Denote by $T-$mod the category of finite-dimensional $T$-modules. 
     
	\medskip

	\begin{lemm}\label{A1}(cf. \cite{LX}).
 Given any $\mathfrak{gl}(m,n)$-module $V$, we can define an $AW$-module structure on $T(A^{\mathbf a},V)=A^{\mathbf a}\otimes V$ by the following actions: 
	\[t^\alpha \xi_I  \frac{\partial}{\partial t_i}\cdot (p\otimes v)=(t^\alpha \xi_I  \frac{\partial}{\partial t_i}\cdot p)\otimes v+\sum_{k=1}^m\alpha_k(t^{\alpha-e_k}\xi_I\cdot p)\otimes (E_{k,i}v)+(-1)^{|I|-1}\sum_{k=1}^n(\frac{\partial}{\partial \xi_k}(t^\alpha\xi_I)\cdot p)\otimes (E_{m+k,i}v),\]
	\[t^\alpha \xi_I  \frac{\partial}{\partial \xi_j}\cdot (p\otimes v)=(t^\alpha \xi_I  \frac{\partial}{\partial \xi_j}\cdot p)\otimes v+\sum_{k=1}^m\alpha_k(t^{\alpha-e_k}\xi_I\cdot p) \otimes (E_{k,m+j}v)+\]
	\[(-1)^{|I|-1}\sum_{k=1}^n(\frac{\partial}{\partial \xi_k}(t^\alpha\xi_I)\cdot p) \otimes (E_{m+k,m+j}v)\]
	\[t^\alpha \xi_I\cdot (p\otimes v)=(t^\alpha \xi_I\cdot p)\otimes v.\]
\end{lemm}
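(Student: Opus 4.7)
The cleanest approach is to leverage the associative superalgebra isomorphism $\pi_1\colon K\otimes \mathcal{U}(T)\xrightarrow{\sim} \bar{U}$. The plan is to endow $A^{\mathbf{a}}\otimes V$ with a natural $(K\otimes \mathcal{U}(T))$-module structure, transport it along $\pi_1^{-1}$ to a $\bar{U}$-module structure, and finally restrict along the inclusion $AW\hookrightarrow \bar{U}$ of Lie superalgebras. The explicit formulas of the lemma will then be read off by computing $\pi_1^{-1}(t^{\alpha}\xi_I\partial)$ in a manageable quotient.

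First I would equip both tensor factors with module structures. The module $A^{\mathbf{a}}$ is already a $K$-module by construction. For $V$, Lemma \ref{SI} together with $\mathfrak{m}\triangle/\mathfrak{m}^2\triangle\cong\mathfrak{gl}(m,n)$ allows us to inflate the $\mathfrak{gl}(m,n)$-action to an $\mathfrak{m}\triangle$-action (with $\mathfrak{m}^2\triangle$ acting trivially) and then transport along $\pi_2$ to a $T$-module structure, hence a $\mathcal{U}(T)$-module structure. The external tensor product then makes $A^{\mathbf{a}}\otimes V$ a $(K\otimes \mathcal{U}(T))$-module; pulling back along $\pi_1^{-1}$ produces a $\bar{U}$-module. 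Since $AW$ is a Lie subsuperalgebra of $\bar{U}$ (under the commutator), all bracket relations of $AW$ hold automatically on this module, with no direct verification required. The action of $a\in A\subset K$ is clearly $t^\alpha\xi_I\cdot(p\otimes v)=(t^\alpha\xi_I\cdot p)\otimes v$, which is the third displayed formula.

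It remains to match the action of $t^\alpha\xi_I\partial$ with the first two formulas. Rewriting the definition of $X_{\alpha,I,\partial}$ as
\[
t^\alpha\xi_I\partial \;=\; X_{\alpha,I,\partial} \;-\!\!\sum_{(\beta,J)\neq(0,\emptyset)}\!\!(-1)^{|\beta|+|J|+\tau(J,I\setminus J)}\binom{\alpha}{\beta}\,t^\beta\xi_J\cdot t^{\alpha-\beta}\xi_{I\setminus J}\partial,
\]
and recursing on $|\alpha|+|I|$, one obtains an expansion of $\pi_1^{-1}(t^\alpha\xi_I\partial)$ in $K\otimes\mathcal{U}(T)$. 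Because $V$ is annihilated by $\pi_2(\mathfrak{m}^2\triangle)$, I only need to compute this expansion modulo $K\otimes \pi_2(\mathfrak{m}^2\triangle)\mathcal{U}(T)$; all terms of higher order in $\mathfrak{m}$ act as zero. Collecting the contributions of bidegree $\leq 1$ in $\mathfrak{m}$ yields exactly three pieces: a ``pure $K$'' term $(t^\alpha\xi_I\partial)\otimes 1$, contributions $\alpha_k\,(t^{\alpha-e_k}\xi_I)\otimes X_{e_k,\emptyset,\partial}$, and contributions $(-1)^{|I|-1}\bigl(\tfrac{\partial}{\partial\xi_k}(t^\alpha\xi_I)\bigr)\otimes X_{0,\{k\},\partial}$. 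Identifying $X_{e_k,\emptyset,\partial_i}\leftrightarrow E_{k,i}$ and $X_{0,\{k\},\partial_i}\leftrightarrow E_{m+k,i}$ under $\mathfrak{m}\triangle\twoheadrightarrow\mathfrak{gl}(m,n)$ delivers the two remaining formulas.

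The principal technical obstacle is the sign bookkeeping in this inversion, in particular the global sign $(-1)^{|I|-1}$ appearing in the $\xi_k$-term. One must track the Koszul signs $(-1)^{\tau(J,I\setminus J)}$ from the supercommutativity of $\Lambda(n)$ together with the super-Leibniz rule of $\partial_{\xi_k}$ as the recursion peels off one index at a time. A helpful simplification is to verify the formulas first on the three families of generators $t_i$, $\xi_j$, $\partial_{t_i}$, $\partial_{\xi_j}$ of $AW$ and then propagate to general $t^\alpha\xi_I\partial$ through the already-established $\bar{U}$-module structure, so that the combinatorial identity only needs to be checked modulo $\pi_2(\mathfrak{m}^2\triangle)$ rather than exactly.
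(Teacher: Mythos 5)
Your proposal is correct and is essentially the paper's (and \cite{LX}'s) approach: the paper proves nothing here beyond invoking the isomorphism $\pi_1\colon K\otimes\mathcal{U}(T)\to\bar U$ to transport the external tensor product module structure on $A^{\mathbf a}\otimes V$ to $\bar U$ and restrict to $AW$, with $V$ viewed as a $T$-module through $T\cong\mathfrak{m}\triangle\twoheadrightarrow\mathfrak{m}\triangle/\mathfrak{m}^2\triangle\cong\mathfrak{gl}(m,n)$, exactly as you do. Your recursive inversion of the formula for $X_{\alpha,I,\partial}$ modulo the ideal killing $V$ is the right way to extract the explicit formulas; only the closing suggestion to ``propagate from generators'' is loose (the elements $t^\alpha\xi_I\partial\in W$ are not associative products of those generators inside $\bar U$, so one still needs either the inversion or a Lie-bracket compatibility check), but this does not affect the main argument.
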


\medskip

  Let $\mathfrak{g}$ be a Lie superalgebra (or an associative superalgebra),  $M$ a $\mathfrak{g}$-module. For any finite subset $F$ of $\mathfrak{g}$ set $\text{Ann}_M(F):=\{x\in M: f.x=0,\; \forall f\in F\}$. We will simply write  $\text{Ann}(F)$   when the module is clear from the context. 

      \begin{lemm}\label{l43} 
        For any $M\in \Omega_{\mathbf a}^{ AW_{m,n}}$, the Whittaker subspace $Wh_{\mathbf a}(M)$ is a $T$-module and $$T(A^{\mathbf a},Wh(M))\cong M.$$
	\end{lemm}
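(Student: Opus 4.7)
The plan is to use the associative algebra isomorphism $\pi_1:K\otimes\mathcal U(T)\xrightarrow{\sim}\bar U$ to identify $M$ with $A^{\mathbf a}\otimes \mathrm{Wh}_{\mathbf a}(M)$. Under $\pi_1$ the subalgebras $K$ and $\mathcal U(T)$ commute inside $\bar U$, so their actions on any $(AW)$-module commute. This gives the first assertion immediately: for $v\in\mathrm{Wh}_{\mathbf a}(M)$ and $t\in T$,
\[
\frac{\partial}{\partial t_i}(tv)=t\,\frac{\partial}{\partial t_i}v=a_i(tv),\qquad \frac{\partial}{\partial \xi_j}(tv)=t\,\frac{\partial}{\partial \xi_j}v=0,
\]
so $\mathrm{Wh}_{\mathbf a}(M)$ is a $T$-submodule of $M$.

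Next I would define $\Phi:A^{\mathbf a}\otimes \mathrm{Wh}_{\mathbf a}(M)\to M$ by $\Phi(p\otimes v)=p\cdot v$, using the $K$-action on $M$. By the construction in Lemma \ref{A1}, the $(AW)$-module structure on the domain is exactly the one coming from the $K\otimes\mathcal U(T)$-action $(x\otimes y)(p\otimes v)=(xp)\otimes(yv)$ transported through $\pi_1$. Combined with the commutation of $K$ and $\mathcal U(T)$ on $M$, this makes $\Phi$ a $\bar U$-homomorphism, hence an $(AW)$-homomorphism.

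The only nontrivial point is that $\Phi$ is bijective, which is a purely $K$-module statement. For surjectivity, set $N=K\cdot\mathrm{Wh}_{\mathbf a}(M)$ and suppose $M/N\neq 0$. Then $M/N$ is a Whittaker $K$-module of type $\mathbf a$, and hence contains a nonzero Whittaker vector $\bar v$; lift it to $v\in M$. Adapting the corrections from the proof of Lemma \ref{ss}, I would first replace $v$ by $v-\xi_j\frac{\partial}{\partial\xi_j}v$ for each $j$ to kill all odd partials, and then replace $v$ by $\sum_{p=0}^{k}\frac{(-1)^p}{p!}t_i^{p}\bigl(\frac{\partial}{\partial t_i}-a_i\bigr)^{p}v$ for each $i$ (with $k$ minimal such that $(\frac{\partial}{\partial t_i}-a_i)^{k+1}v=0$) to make $\frac{\partial}{\partial t_i}$ act as $a_i$. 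Each correction lies in $N$ by $K$-stability of $N$, and the pairwise commutation of the elements of $\triangle$ ensures that later corrections do not undo earlier ones. The resulting $v'\in v+N$ is a genuine Whittaker vector, so $v'\in\mathrm{Wh}_{\mathbf a}(M)\subset N$, forcing $v\in N$, a contradiction. Thus $M=K\cdot\mathrm{Wh}_{\mathbf a}(M)=A\cdot\mathrm{Wh}_{\mathbf a}(M)$ (the last equality because $\triangle$ acts by scalars on $\mathrm{Wh}_{\mathbf a}(M)$), so $\Phi$ is surjective. Surjectivity realizes $M$ as a finite-length Whittaker $K$-module of type $\mathbf a$, so Lemma \ref{ss}(2) yields $M\cong (A^{\mathbf a})^{\oplus k}$ as $K$-modules, and comparing dimensions of Whittaker subspaces forces $k=\dim\mathrm{Wh}_{\mathbf a}(M)$. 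Hence $\Phi$ is an isomorphism.

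The main obstacle is the surjectivity step; everything else is essentially formal once $\bar U\cong K\otimes\mathcal U(T)$ and the semisimplicity of the $K$-Whittaker category are in hand. The delicate part is verifying that the lifting corrections can be chosen inside $N$ and executed consistently for all partial derivatives simultaneously, which relies on the $K$-stability of $N$ together with the commutativity of $\triangle$ and the fact that $\triangle$ normalizes $\mathrm{Span}\{t_i,\xi_j\}$ modulo constants.
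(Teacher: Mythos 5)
Your proof is correct and shares the paper's overall architecture: both arguments reduce everything to the isomorphism $\pi_1:K\otimes\mathcal{U}(T)\to\bar{U}$, obtain the $T$-module structure on $Wh_{\mathbf a}(M)$ from $[K,T]=0$, and study the multiplication map $A^{\mathbf a}\otimes Wh_{\mathbf a}(M)\to M$; the differences lie in how bijectivity is established. For surjectivity the paper proves the equality $M=Wh_{\mathbf a}(M)+\psi(A\otimes Wh_{\mathbf a}(M))$ directly, peeling off one variable at a time by induction on the nilpotency height, using exactly the correction operators $1-\xi_j\frac{\partial}{\partial\xi_j}$ and $\sum_p\frac{(-1)^p}{p!}t_i^p(\frac{\partial}{\partial t_i}-a_i)^p$ that you use; you package the same computation as a contradiction argument, lifting a Whittaker vector from $M/N$ and observing that the corrections stay in $N$ because $(\frac{\partial}{\partial t_i}-a_i)v,\ \frac{\partial}{\partial\xi_j}v\in N$ and $N$ is $K$-stable. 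This is a reorganization rather than a new idea, though you should spell out the (standard, but used) fact that a nonzero module on which $\frac{\partial}{\partial t_i}-a_i$ and $\frac{\partial}{\partial\xi_j}$ act locally nilpotently and supercommute contains a nonzero Whittaker vector. For injectivity your route is genuinely different: the paper applies the Jacobson density theorem to the strictly simple $K$-module $A^{\mathbf a}$ to project onto a single tensor component of a kernel element, whereas you use surjectivity to see that $M$ is a finite-length semisimple $K$-module, identify $M\cong(A^{\mathbf a})^{\oplus\dim Wh_{\mathbf a}(M)}$ via Lemma \ref{ss}, and conclude by comparing lengths (a surjection between finite-length modules of equal length is injective). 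Both are valid; the density argument does not need $\dim Wh_{\mathbf a}(M)<\infty$ or the semisimplicity statement of Lemma \ref{ss}(2), while yours is shorter once those are in hand.
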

    
	\begin{proof}
The $K\otimes \mathcal{U}(T)$-module structure on $M$ is given  via the isomorphism $\pi_1$. As $[K,T]=0$, the space $Wh(M)$ is a $T$-module.\\

	\textbf{Claim}: The map $\psi: A^\mathbf{a}\otimes Wh_a(M)\rightarrow M,$  $t^r\xi_I\otimes v\mapsto t^r\xi_Iv,\; r\in \Z^m_{\geq 0},\; I \subset \{1,2,\dots, n\}, $ is an isomorphism.
    
        \medskip

        \textbf{I. Surjectivity of $\psi$:}

         \textbf{Subclaim 1:} We have the following: 
        \begin{equation}\label{e}
            M=\text{Ann}(\triangle_{0,n})+\psi(\Lambda(n)\otimes \text{Ann}(\triangle_{0,n})).
        \end{equation}
        
     It is easy to see,  $\text{Ann}(\triangle_{0,n})+\psi(\Lambda(n)\otimes \text{Ann}(\triangle_{0,n}))\subseteq M.$ 
        \medskip
        
                First, we will prove that
        \begin{equation}\label{e1}
            M=\text{Ann}( \frac{\partial }{\partial \xi_1})+\psi( \xi_1\otimes \text{Ann}( \frac{\partial }{\partial \xi_1}))
        \end{equation}
        
         It is easy to see $\text{Ann}( \frac{\partial }{\partial \xi_1})+\psi( \xi_1\otimes \text{Ann}( \frac{\partial }{\partial \xi_1}))\subseteq M.$
        \
        Let $x\in M$ such that $\frac{\partial }{\partial \xi_1}x= 0$, then it is trivially true. Assume $\frac{\partial }{\partial \xi_1}x\neq 0$, then it is easy to see that $\frac{\partial }{\partial \xi_1} (x-\xi_1\frac{\partial }{\partial \xi_1}x)=0$. As $\frac{\partial }{\partial \xi_1}\frac{\partial }{\partial \xi_1}x=0$, we have $M=\text{Ann}( \frac{\partial }{\partial \xi_1})+\psi( \xi_1\otimes \text{Ann}( \frac{\partial }{\partial \xi_1}))$. In similar fashion, we get $ \text{Ann}( \frac{\partial }{\partial \xi_1})\subseteq \text{Ann}(\frac{\partial }{\partial \xi_1}, \frac{\partial }{\partial \xi_2})+\psi( \xi_2\otimes \text{Ann}( \frac{\partial }{\partial \xi_1},\frac{\partial }{\partial \xi_2} ))$, putting this on equation \ref{e1} we get,

        \begin{equation}
\begin{split}
M & = \text{Ann}(\frac{\partial }{\partial \xi_1}, \frac{\partial }{\partial \xi_2})+\psi( \xi_1\otimes(  \text{Ann}( \frac{\partial }{\partial \xi_1},\frac{\partial }{\partial \xi_2} )+\psi(\xi_2\otimes \text{Ann}(\frac{\partial }{\partial \xi_1}, \frac{\partial }{\partial \xi_2})))) \\
 & = \text{Ann}(\frac{\partial }{\partial \xi_1}, \frac{\partial }{\partial \xi_2})+\psi( \xi_1\otimes  \text{Ann}( \frac{\partial }{\partial \xi_1},\frac{\partial }{\partial \xi_2} ))+\psi( \xi_2\otimes  \text{Ann}( \frac{\partial }{\partial \xi_1},\frac{\partial }{\partial \xi_2} ))+\psi( \xi_1\xi_2\otimes  \text{Ann}( \frac{\partial }{\partial \xi_1},\frac{\partial }{\partial \xi_2} )). 
\end{split}
\end{equation}

Now continuing this process we will have equation \ref{e}. Now we will show the following

\begin{equation}\label{hta1}
\text{Ann}(\triangle_{0,n})\subseteq \text{Ann}(\triangle_{0,n}, \frac{\partial}{\partial t_1}-a_1)+\psi(\C[t_1] \otimes \text{Ann}(\triangle_{0,n}, \frac{\partial}{\partial t_1}-a_1))
\end{equation}

Given any $x\in M$, we define $Ht_{a_1}(x):=\text{min}\{k|(\frac{\partial}{\partial t_1}-a_1)^{k+1}.x=0\}$. We will prove Equation \ref{hta1} by induction on height of elements of $\text{Ann}(\triangle_{(0,n)})$. 

\medskip

Equation \ref{hta1} is trivially true for any height zero elements. Let $x\in \text{Ann}(\triangle_{(0,n)})$ be an element of height $1$, then we see that $x-t(\frac{\partial}{\partial t_1}-a_1)x\in \text{Ann}(\triangle_{0,n}, \frac{\partial}{\partial t_1}-a_1)$ and hence Equation \ref{hta1} is true for any elements upto height $1$. Now we assume that equation is true for all elements of height $k-1$ and $x\in \text{Ann}(\triangle_{0,n})$ is an element of height $k$. Then (similarly to  Equation \ref{ann}) it is easy to see that 

\begin{equation*}
   x-t_1(\frac{\partial}{\partial t_1}-a_1)x+\frac{1}{2!}t_2^2(\frac{\partial}{\partial t_1}-a_1)^2x-\frac{1}{3!}t_1^3(\frac{\partial}{\partial t_1}-a_1)^3x+\cdots +\frac{(-1)^k}{k!}t_1^k(\frac{\partial}{\partial t_1}-a_1)^kx\in  \text{Ann}(\triangle_{0,n}, \frac{\partial}{\partial t_1}-a_1).
\end{equation*}

Now by induction assumption we have     
 \begin{equation*}
     \begin{split}
         t_1(\frac{\partial}{\partial t_1}-a_1)x-\frac{1}{2!}t_2^2(\frac{\partial}{\partial t_1}-a_1)^2x+\frac{1}{3!}t_1^3(\frac{\partial}{\partial t_1}-a_1)^3x+\cdots +\frac{(-1)^{k-1}}{k!}t_1^k(\frac{\partial}{\partial t_1}-a_1)^kx
         \\ \in  \text{Ann}(\triangle_{0,n}, \frac{\partial}{\partial t_1}-a_1)+\psi(\C[t_1] \otimes \text{Ann}(\triangle_{0,n}, \frac{\partial}{\partial t_1}-a_1)).
     \end{split}
 \end{equation*}
 
Therefore Equation \ref{hta1} is true for all elements of height $k$ and hence by induction it is true for all elements of $\text{Ann}(\triangle_{0,n}).$
\medskip

Now in a similar fashion we can prove the following
\begin{equation}\label{hta2}
\text{Ann}(\triangle_{0,n}, \frac{\partial}{\partial t_1}-a_1)\subseteq \text{Ann}(\triangle_{0,n}, \frac{\partial}{\partial t_1}-a_1,\frac{\partial}{\partial t_1}-a_2)+\psi(\C[t_2]\otimes  \text{Ann}(\triangle_{0,n}, \frac{\partial}{\partial t_1}-a_1,\frac{\partial}{\partial t_1}-a_2)) 
\end{equation}
Substituting \ref{hta2} in \ref{hta1} we get 

\begin{equation}
\text{Ann}(\triangle_{0,n})\subseteq \text{Ann}(\triangle_{0,n}, \frac{\partial}{\partial t_1}-a_1,\frac{\partial}{\partial t_2}-a_2)+\psi(\C[t_1,t_2] \otimes \text{Ann}(\triangle_{0,n}, \frac{\partial}{\partial t_1}-a_1,\frac{\partial}{\partial t_2}-a_2))
\end{equation}

Continuing this process we get

\begin{equation}\label{htam}
   \text{Ann}(\triangle_{0,n})\subseteq \text{Wh}_a(M)+\psi(\C[t_1,\cdots,t_m]\otimes \text{Wh}_a(M)) 
\end{equation}

Putting Equation \ref{htam} in  \ref{e}, we get 

\begin{equation}
    M=\text{Wh}_a(M)+\psi(A\otimes M),
\end{equation}

and hence we have surjectivity of $\psi$.

         {\textbf{II. Injectivity of $\psi$:}}
         
         We know by  Lemma \ref{ss}(1), $A^a$ is a simple $K$-module, and hence it is strictly simple. We assume 
         \[w=\sum_{(r,I)\in F_w}t^r\xi_I\otimes v_{(r,I)}\in \text{Ker}(\psi),\]
         where $F_w$ is a finite subset of $\Z_{\geq 0}^m\times \mathcal{P}(n)$ with $\mathcal{P}(n)$ as set of all subsets of $\{1,\cdots,n\}$. Now by Density theorem we know that there exist $X\in K$ such that $X.t^r\xi_I=1$ and $X.t^s\xi_J=0$ for a fixed $(r,I)\in F_w$ and all $(s,J)\in F_w\setminus\{(r,I)\}.$ Then we have $X.\psi(w)=\psi(X.w)=v_{(r,I)}=0$ and hence we have $\psi$ is injective.
        
	\end{proof}
	
	\begin{theo}\label{equiv}
		The functor 
		\[\mathcal{F}: T-mod\rightarrow \Omega_{\mathbf a}^{ AW_{m,n}},\]
		\[V \mapsto T(A^{\mathbf a},V),\]
		is an equivalence of categories. 
	\end{theo}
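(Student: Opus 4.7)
The plan is to exhibit a quasi-inverse $\mathcal{G}: \Omega_{\mathbf{a}}^{\widetilde{W}} \to T\text{-mod}$ sending $M$ to $\mathrm{Wh}_{\mathbf{a}}(M)$, equipped with the $T$-module structure supplied by Lemma \ref{l43}. The finiteness condition built into $\Omega_{\mathbf{a}}^{\widetilde{W}}$ places $\mathcal{G}(M)$ in $T\text{-mod}$. Any $AW$-homomorphism $g: M \to M'$ is in particular $\triangle$-equivariant, so it restricts to a linear map on Whittaker subspaces, and this restriction is automatically $T$-linear because the algebra isomorphism $\pi_1$ identifies $T$ with $1 \otimes \mathcal{U}(T)$, so the $T$-action on any $AW$-module is part of the $AW$-action that $g$ respects. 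Thus $\mathcal{G}$ is a functor.

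The natural isomorphism $\mathcal{F} \circ \mathcal{G} \xrightarrow{\sim} \mathrm{id}$ is essentially the content of Lemma \ref{l43}: for each $M$, the map $\psi_M: T(A^{\mathbf{a}}, \mathrm{Wh}_{\mathbf{a}}(M)) \to M$, $t^r\xi_I \otimes v \mapsto t^r\xi_I v$, is an $AW$-isomorphism, and naturality is immediate since any $AW$-map $g$ satisfies $g(t^r\xi_I v) = t^r\xi_I g(v)$, giving the commuting square $g \circ \psi_M = \psi_{M'} \circ (\mathrm{id} \otimes g|_{\mathrm{Wh}})$.

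For $\mathrm{id} \xrightarrow{\sim} \mathcal{G} \circ \mathcal{F}$, I would first show $\mathrm{Wh}_{\mathbf{a}}(T(A^{\mathbf{a}}, V)) = 1 \otimes V$ as vector spaces. Specializing the formulas of Lemma \ref{A1} to $\alpha = 0$ and $I = \emptyset$, each element of $\triangle$ acts on $p \otimes v$ purely through the $K$-action on the first factor, so an element $\sum p_i \otimes v_i$ with $v_i$ linearly independent is a Whittaker vector iff every $p_i$ lies in $\mathrm{Wh}_{\mathbf{a}}(A^{\mathbf{a}})$; the twist $\sigma_{\mathbf{a}}(\partial/\partial t_i) = \partial/\partial t_i + a_i$ forces such $p_i$ to be constants, giving $\mathrm{Wh}_{\mathbf{a}}(A^{\mathbf{a}}) = \mathbb{C}\cdot 1$. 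The candidate natural isomorphism $V \to \mathrm{Wh}_{\mathbf{a}}(T(A^{\mathbf{a}}, V))$, $v \mapsto 1 \otimes v$, is natural in $V$ since $\mathcal{F}(f) = \mathrm{id} \otimes f$ restricts to $f$ on $1 \otimes V$, and its $T$-linearity follows because $\pi_1$ identifies $X \in T$ with $1 \otimes X \in K \otimes \mathcal{U}(T)$, whence $X \cdot (1 \otimes v) = 1 \otimes Xv$.

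The main obstacle is this last compatibility of $T$-structures, which is conceptually clean via $\pi_1$ but deserves a sanity check against the explicit formulas of Lemma \ref{A1}. For example, rewriting $X_{e_k, \emptyset, \partial/\partial t_i} = t_k (\partial/\partial t_i) - t_k \cdot (\partial/\partial t_i)$ and evaluating each summand at $1 \otimes v$ via Lemma \ref{A1} produces two contributions $a_i t_k \otimes v$ that cancel, leaving exactly $1 \otimes E_{k,i} v$; this is consistent with the fact that any finite-dimensional $T$-module factors through the quotient $\mathfrak{gl}(m,n) \cong \mathfrak{m}\triangle/\mathfrak{m}^2\triangle$ noted at the end of Section \ref{KR}.
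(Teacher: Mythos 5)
Your proposal is correct and follows essentially the same route as the paper: the quasi-inverse is $M\mapsto \mathrm{Wh}_{\mathbf a}(M)$, and the counit isomorphism is exactly Lemma \ref{l43}. The only difference in packaging is that where the paper establishes full faithfulness via $\mathrm{End}_K(A^{\mathbf a})=\C$ and the isomorphism $\mathrm{Hom}_{K\otimes\mathcal{U}(T)}(A^{\mathbf a}\otimes V, A^{\mathbf a}\otimes W)\cong \mathrm{Hom}_{\mathcal{U}(T)}(V,W)$, you verify directly that the unit $v\mapsto 1\otimes v$ identifies $V$ with $\mathrm{Wh}_{\mathbf a}(T(A^{\mathbf a},V))$ as $T$-modules --- an equivalent check (and one whose vector-space content the paper already records just before Lemma \ref{A1}).
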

	\begin{proof}
		By the Schur's Lemma, $End_K(A^{\mathbf a})=\C$. By Lemma \ref{A1} and Lemma \ref{l43}, for any $M\in \Omega_{\mathbf a}^{AW_{m,n}}$ there exists a $T$-module $Wh_{\mathbf a}(M)$ such that $\mathcal{F}(Wh_{\mathbf a}(M))\cong M$. We have the following isomorphism $$Hom_{K\otimes \mathcal{U}(T)}(A^{\mathbf a}\otimes V, A^{\mathbf a}\otimes W)\cong Hom_{\mathcal{U}(T)}(V,W),$$ which implies that the homomorphism
		$\mathcal{F}_{V,W}:Hom_T(V,W) \rightarrow Hom_{\widetilde{W}}(\mathcal{F}(V),\mathcal{F}(W))$ is an isomorphism, for all $V,W\in T-mod$. Therefore $\mathcal{F}$ is an equivalence of categories. 
	\end{proof}
	
	Applying Lemma \ref{ss}, Lemma \ref{A1} and Theorem \ref{equiv} we obtain  the following theorem.
    
 \begin{theo}\label{EXW}
  Suppose $M$ is a simple Whittaker $AW$-module of type ${\mathbf a}$ with  finite-dimensional $Wh_{\mathbf a}(M)$. Then $M\cong T(A^{\mathbf a},V)$ for some finite-dimensional simple $\mathfrak{gl}(m,n)$-module $V$. 
 \end{theo}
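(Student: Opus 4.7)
The plan is to reduce the statement to the category equivalence already established in Theorem \ref{equiv} combined with the structural description of finite-dimensional simple $T$-modules given after Lemma \ref{SI}.

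First, I would invoke Theorem \ref{equiv}: since $M \in \Omega_{\mathbf a}^{\widetilde{W}}$, the functor $\mathcal F$ is essentially surjective, so we may write $M \cong \mathcal F(V) = T(A^{\mathbf a}, V)$ where $V := Wh_{\mathbf a}(M)$ is the associated $T$-module, which is finite-dimensional by the hypothesis on $M$. The content of Lemma \ref{l43} identifies this $T$-module structure through the restriction along $T \hookrightarrow AW$ coming from the isomorphism $\pi_1$ in Lemma 3.5.

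Next I would transfer simplicity across the equivalence. Because $\mathcal F$ is fully faithful on $T$-mod, the lattice of $T$-submodules of $V$ is in bijection with the lattice of $AW$-submodules of $T(A^{\mathbf a},V)$; equivalently, if $V$ had a nontrivial proper $T$-submodule $V'$, then $T(A^{\mathbf a},V') \subsetneq T(A^{\mathbf a},V)$ would be a nontrivial proper $AW$-submodule, contradicting simplicity of $M$. Hence $V$ is a simple $T$-module.

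Finally, to upgrade $V$ from a simple $T$-module to a simple $\mathfrak{gl}(m,n)$-module, I would apply the finite-dimensional version of Lemma 3.7 noted in the remark preceding the statement: since $V$ is finite-dimensional and simple over $T \cong \mathfrak m \triangle$ (via $\pi_2$ from Lemma \ref{SI}), the ideal $\mathfrak m^2 \triangle$ must annihilate $V$, so $V$ factors through the quotient $\mathfrak m \triangle / \mathfrak m^2 \triangle \cong \mathfrak{gl}(m,n)$ as a finite-dimensional simple $\mathfrak{gl}(m,n)$-module. Putting these three steps together yields $M \cong T(A^{\mathbf a}, V)$ with $V$ a finite-dimensional simple $\mathfrak{gl}(m,n)$-module.

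There is no real obstacle here — the whole theorem is a bookkeeping assembly of results already proved. The only point requiring slight care is confirming that the $T$-action on $Wh_{\mathbf a}(M)$ produced by Lemma \ref{l43} actually factors through $\mathfrak{gl}(m,n)$ in the way asserted by Lemma \ref{A1}, so that the resulting $\mathcal{F}(V)$ structure matches the explicit $AW$-action written in Lemma \ref{A1}; this is automatic from the $K \otimes \mathcal U(T)$-decomposition via $\pi_1$, but worth verifying so that the isomorphism $M \cong T(A^{\mathbf a},V)$ is genuinely an $AW$-isomorphism and not merely a vector-space identification.
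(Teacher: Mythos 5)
Your proposal is correct and follows essentially the same route as the paper, which derives the theorem directly from Lemma \ref{ss}, Lemma \ref{A1} and Theorem \ref{equiv}, using the remark after Lemma 3.7 of \cite{LX} that finite-dimensional simple $T\cong\mathfrak{m}\triangle$-modules are annihilated by $\mathfrak{m}^2\triangle$ and hence factor through $\mathfrak{gl}(m,n)$. Your version simply spells out the simplicity transfer across the equivalence, which the paper leaves implicit.
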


	\section{Whittaker modules in $\Omega_{\mathbf{a}}^{ W}$ }\label{W}

	In this Section we will prove our main theorem. First, we make the following  observation.
    \begin{remark}
     For any $M\in \Omega_{\mathbf a}^{W}$  generated by $Wh_{\mathbf a}(M)$, we have dim$(\widetilde{Wh_{\mathbf a}}(M))< \infty$. 
    \end{remark}
    \begin{proof} 
        
        Assume that the dimension of a fixed generalized Whittaker space is infinite. 
         Since $(\frac{\partial}{\partial {\xi_j}})^2.v=0$ for all $v\in M$, then it contains  an infinite direct sum 
         of indecomposable modules over $\mathbb C[\frac{\partial}{\partial \xi_i}]$ (corresponding to Jordan blocks with eigenvalue zero). Hence, the subspace of eigenvectors of 
         $\frac{\partial}{\partial \xi_i}$ with eigenvalue zero in $\widetilde{Wh_{\mathbf a}}(M)$   is infinite dimensional. Denote this subspace by $S_{\mathbf a}^i$. Consider now the subspace $ \frac{\partial}{\partial \xi_j}(S_{\mathbf a}^i)\subset \widetilde{Wh_{\mathbf a}}(M)$ for $j\neq i$. We see that either $S_{\mathbf a}^i$ or $ \frac{\partial}{\partial \xi_j}(S_{\mathbf a}^i)$ is annihilated by both $\frac{\partial}{\partial \xi_i}$ and $\frac{\partial}{\partial \xi_j}$, as these operators anticommute. Moreover, 
         if $ \frac{\partial}{\partial \xi_j}(S_{\mathbf a}^i)$ is finite dimensional then $S_{\mathbf a}^i\cap Ker(\frac{\partial}{\partial \xi_j})$ is infinite dimensional. 
         We see by induction that $\widetilde{Wh_{\mathbf a}}(M)$ contains an infinite dimensional subspace annihilated by all $\frac{\partial}{\partial {\xi_j}}$. We conclude that dim$(Wh_{\mathbf a}(M))= \infty$, which is a contradiction.

    \end{proof}
    
    From now on we will assume that ${\mathbf a}\in \C^m$ is a non-singular vector. 
    
    Any Whittaker module over $W_{m,n}$ can be considered as Whittaker module of  $W_{m,0}$ by restriction.  Following  \cite[Lemma 4.1]{ZL}, it is easy to see that any Whittaker module over the Witt superalgebra $W$ is a free $\mathcal{U}(\mathfrak{h}_{m,0})$-module. Moreover, it is generated by generalized Whittaker vectors  as a $W_{m,0}$-module. In particular we have:
	\begin{lemm}\label{CartanF}
		Suppose $M\in \Omega_{\mathbf a}^W$  is generated by $Wh_{\mathbf a}(M)$ as a $W$-module, then\\ $M=\mathcal{U}(\mathfrak{h}_{m,0})\widetilde{Wh_{\mathbf a}}(M)$.
		Moreover, if $\{v_i:i \in \Lambda\}$ is a basis of $\widetilde{Wh_{\mathbf a}}(M)$, then $\{h^rv_i:r \in \Z^m_{\geq 0}, i \in \Lambda\}$ is a basis of $M$, where $h^r=(t_1\frac{\partial}{\partial t_1})^{r_1}(t_2\frac{\partial}{\partial t_2})^{r_2}\cdots (t_m\frac{\partial}{\partial t_m})^{r_m}$. 
	\end{lemm}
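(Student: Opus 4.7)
Proof plan. The plan is to follow the strategy of \cite[Lemma 4.1]{ZL}, which treats the corresponding statement for the Witt algebra; the proof decomposes into surjectivity $M \subseteq \mathcal{U}(\mathfrak{h}_{m,0})\widetilde{Wh_{\mathbf a}}(M)$ and linear independence of the candidate basis $\{h^r v_i\}$. Freeness is an immediate consequence of the two parts, and the generation hypothesis on $Wh_{\mathbf{a}}(M)$ ensures that $\widetilde{Wh_{\mathbf a}}(M)$ is ``large enough'' to provide a full set of generators.

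For surjectivity, set $N := \mathcal{U}(\mathfrak{h}_{m,0})\widetilde{Wh_{\mathbf a}}(M)$ and proceed by induction on the total multi-height $|k(w)| = \sum_{i=1}^m k_i(w)$, where $k_i(w)$ is the smallest integer with $(\frac{\partial}{\partial t_i} - a_i)^{k_i(w)+1} w = 0$; this is finite for every $w\in M$ since $M \in \Omega_{\mathbf a}^W$. The base case $|k(w)|=0$ gives $w \in \widetilde{Wh_{\mathbf a}}(M) \subseteq N$. For the inductive step, pick $j$ with $k_j(w) \geq 1$. The commutator $[\frac{\partial}{\partial t_j}, h_j] = \frac{\partial}{\partial t_j}$ iterates to $\frac{\partial}{\partial t_j}\cdot p(h) = p(h + e_j)\cdot \frac{\partial}{\partial t_j}$ for every $p \in \C[h_1, \ldots, h_m]$, from which
\[
\bigl(\tfrac{\partial}{\partial t_j} - a_j\bigr)\bigl(p(h)\,u\bigr) = a_j\,\Delta_j p(h) \cdot u, \qquad u \in \widetilde{Wh_{\mathbf a}}(M),
\]
with $\Delta_j p = p(\ldots, h_j + 1, \ldots) - p$. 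Since $a_j \neq 0$, any $p' \in \C[h_1, \ldots, h_m]$ admits a minimal-degree antidifference $q$ with $a_j\Delta_j q = p'$. Applying this to $(\frac{\partial}{\partial t_j} - a_j) w$, which has strictly smaller total multi-height and thus lies in $N$ by induction, produces $w' \in N$ with $(\frac{\partial}{\partial t_j} - a_j)(w - w') = 0$ and $|k(w - w')| < |k(w)|$. A second appeal to the induction hypothesis gives $w - w' \in N$, hence $w \in N$.

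For linear independence, suppose $\sum_{r, i} c_{r, i} h^r v_i = 0$ and choose $r^* \in \Z^m_{\geq 0}$ maximal in the support in lexicographic order. Applying $\prod_{j=1}^m (\frac{\partial}{\partial t_j} - a_j)^{r_j^*}$ and invoking the discrete-difference identity above, each term $h^r v_i$ is mapped to $\prod_j a_j^{r_j^*} \Delta_j^{r_j^*}(h^r) \cdot v_i$. The factor $\Delta_j^{r_j^*}(h_j^{r_j})$ vanishes whenever $r_j < r_j^*$, while any $r$ not componentwise dominated by $r^*$ is excluded by the lex-maximality of $r^*$ (if $r \geq r^*$ componentwise and $r \neq r^*$, then $r >_{\mathrm{lex}} r^*$). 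Thus the only surviving term is $r = r^*$, contributing $\prod_j a_j^{r_j^*} r_j^*! \sum_i c_{r^*, i} v_i = 0$; since $a_j \neq 0$ and the $v_i$ are linearly independent in $\widetilde{Wh_{\mathbf a}}(M)$, we conclude $c_{r^*, i} = 0$ for all $i$. Downward induction on $r^*$ clears the remaining coefficients.

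The main obstacle is the degree control in the surjectivity step: one must choose the antidifference $q$ so that no $h_i$-degree for $i \neq j$ exceeds $k_i(w)$, ensuring the inductive invariant $|k|$ strictly decreases rather than merely remaining equal. This is arranged by observing that $(\frac{\partial}{\partial t_j} - a_j)w$, once expressed inside $N$ via the induction hypothesis, already has $h_i$-degree at most $k_i(w)$ for $i \neq j$; the minimal-degree antidifference then preserves this bound by construction. The remainder reduces to routine discrete-difference bookkeeping, which is meaningful precisely because of the nonsingularity assumption $a_j \neq 0$.
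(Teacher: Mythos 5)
Your argument is correct and follows the same route the paper takes: the paper restricts to $W_{m,0}$ and simply cites \cite[Lemma 4.1]{ZL}, whose antidifference/discrete-difference strategy is exactly what you have written out, using $\frac{\partial}{\partial t_j}p(h)=p(h+e_j)\frac{\partial}{\partial t_j}$ and non-singularity of $\mathbf a$ to invert $a_j\Delta_j$. The one point to make explicit is that the degree control in your surjectivity step (knowing that the expression of $(\frac{\partial}{\partial t_j}-a_j)w$ in $N$ has $h_i$-degree at most $k_i(w)$) rests on the uniqueness of such expressions, i.e.\ on the linear-independence half, so that half should be proved first; since it is logically independent of surjectivity, this is only a matter of ordering.
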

    
	\begin{coro}\label{Cartan free}
		Any non-trivial $M\in \Omega_{\mathbf a}^{W}$, which is generated by $Wh_{\mathbf a}(M)$ as a $W$-module, is a $\mathcal{U}(\mathfrak{h}_{m,0})$-free module of finite rank.   
	\end{coro}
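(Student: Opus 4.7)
The plan is to derive Corollary \ref{Cartan free} directly from Lemma \ref{CartanF} together with the Remark immediately preceding it; both ingredients are already in place, so no new calculation is required.

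First, I would use the non-triviality hypothesis to ensure the rank is positive. Since $M \neq 0$ and $M$ is generated by $Wh_{\mathbf{a}}(M)$ as a $W$-module, we must have $Wh_{\mathbf{a}}(M) \neq 0$, and hence $\widetilde{Wh_{\mathbf{a}}}(M) \supseteq Wh_{\mathbf{a}}(M)$ is also non-zero. By the preceding Remark, $d := \dim \widetilde{Wh_{\mathbf{a}}}(M)$ is a (positive) finite integer, and I fix a basis $v_1, \ldots, v_d$ of $\widetilde{Wh_{\mathbf{a}}}(M)$.

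Next, I would invoke Lemma \ref{CartanF}, which asserts that the set
\[
\{h^r v_i : r \in \Z_{\geq 0}^m,\ 1 \leq i \leq d\}
\]
is a $\C$-basis of $M$, where $h^r = (t_1 \tfrac{\partial}{\partial t_1})^{r_1} \cdots (t_m \tfrac{\partial}{\partial t_m})^{r_m}$. Because $\mathfrak{h}_{m,0}$ is a commutative Lie algebra of dimension $m$, its universal enveloping algebra is the polynomial ring $\C[h_1, \ldots, h_m]$, and the ordered monomials $\{h^r : r \in \Z_{\geq 0}^m\}$ form a $\C$-basis of $\mathcal{U}(\mathfrak{h}_{m,0})$. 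Translating the basis statement accordingly, one obtains
\[
M \;=\; \bigoplus_{i=1}^{d} \mathcal{U}(\mathfrak{h}_{m,0})\, v_i
\]
as $\mathcal{U}(\mathfrak{h}_{m,0})$-modules, which is precisely the assertion that $M$ is free over $\mathcal{U}(\mathfrak{h}_{m,0})$ of rank $d < \infty$.

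The only input that is not a direct restatement of definitions is the finiteness of $d$, which is supplied by the Remark. Consequently I do not foresee any genuine obstacle: the corollary is essentially a repackaging of Lemma \ref{CartanF} in the language of $\mathcal{U}(\mathfrak{h}_{m,0})$-module freeness, together with the finite-dimensionality of $\widetilde{Wh_{\mathbf{a}}}(M)$ already recorded in the Remark.
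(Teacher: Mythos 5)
Your argument is correct and is exactly the intended derivation: the paper gives no separate proof of the corollary, treating it as an immediate consequence of the preceding Remark (finite-dimensionality of $\widetilde{Wh_{\mathbf a}}(M)$) and Lemma \ref{CartanF} (the basis $\{h^rv_i\}$), which is precisely what you assemble.
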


	Now we will define the cover of the Whittaker $W$-module $M$. Note that $W\otimes M$ is a $\widetilde{W}$-module with the action given by
	$$x\cdot (a\otimes b)=[x,a]\otimes b+ a\otimes x \cdot b,\; y\cdot a\otimes b=ya\otimes b,$$
	where $x,a\in W,\; y\in A,\; b\in M$.
    
	Let $\theta:W\otimes M\rightarrow M$ be the linear map defined by $\theta(a\otimes b )=ab$ for $a\in W,\; b\in M$. We see that $\theta$ is a $W$-module homomorphism.
	Denote $K(M)=\{v\in \text{Ker}(\theta)/ Av\subseteq \text{Ker}(\theta)\}$. It is easy to see that $K(M)$ is a $AW$-submodule of $W\otimes M$. Set $\widehat{M}=(W\otimes M)/K(M)$. Note that $\theta$  induces a $W$-module homomorphism $\hat{\theta}:\widehat{M}\rightarrow M$. The module $\widehat{M}$ is called the cover of $M$ if $WM=M$. For any $X\in W$, $m\in M$, we denote the element $\hat{\theta}(X\otimes m)$ by $X\boxtimes m$.
	\begin{remark}
		If $M\in \Omega_{\mathbf a}^W$, then $\triangle$ acts locally finitely  on $W\otimes M$ and on $\widehat{M}$.
	\end{remark}
 
	For any $\alpha, \beta \in \Z^m_+,\; I,J \subseteq \{1,2,\dots, n\},\; r\in \N,\; 1\leq j\leq m,\; \partial, \partial^\prime \in \triangle$, we define
	\begin{equation}
	w_{\alpha,\beta, I, J}^{r,j,\partial,\partial ^{\prime}}=\sum_{i=0}^r(-1)^i {r \choose i} t^{\alpha+ (r-i) e_j}\xi_I \partial \cdot  t^{\beta +ie_j} \xi_J\partial^{\prime} \in \mathcal{U}(W).
	\end{equation}
	Then we have
	\begin{equation}
		w_{\alpha+e_j,\beta, I, J}^{r,j,\partial,\partial^\prime}-w_{\alpha,\beta+e_j, I, J}^{r,j,\partial,\partial^\prime}=w_{\alpha,\beta, I, J}^{r+1,j,\partial,\partial^\prime}.
	\end{equation}
	\begin{theo}\label{UBM}
		Suppose that $M$ is a uniformly bounded $W$-module. Then there exists $r\in \N$ such that $w_{\alpha,\beta, I, J}^{r,j,\partial,\partial^\prime}M=0$ for all $\alpha, \beta \in \Z^m_+,\; I,J \subseteq \{1,2,\dots, n\},\;  1\leq j\leq m,\; \partial, \partial^\prime \in \triangle$.
	\end{theo}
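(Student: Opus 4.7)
The hypothesis that $M$ is uniformly bounded implies that $M$ is a weight module with $\dim M_{\lambda}\le N$ for every weight $\lambda$ and some fixed integer $N$. For fixed $\alpha,\beta,I,J,j,\partial,\partial'$, all summands $t^{\alpha+(r-i)e_{j}}\xi_{I}\partial\cdot t^{\beta+ie_{j}}\xi_{J}\partial'$ of $w^{r,j,\partial,\partial'}_{\alpha,\beta,I,J}$ carry the same $\mathfrak{h}_{m,0}$-weight, since the dependence on $i$ in the $j$-th coordinate cancels. Hence $w^{r,j,\partial,\partial'}_{\alpha,\beta,I,J}$ acts as a linear map $M_{\lambda}\to M_{\lambda+\nu}$ for every $\lambda$, with $\nu$ independent of $i$, and it suffices to show this map is zero for a choice of $r$ that is uniform in $\lambda$ and the other parameters.

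The plan is then to use the structural fact that, on a uniformly bounded weight module, the matrix entries of the operator $t^{\gamma}\xi_{I}\partial\colon M_{\lambda}\to M_{\lambda+\gamma-e_{\ast}}$ (relative to chosen bases of the source and target weight spaces) are polynomial in $\gamma\in\Z_{+}^{m}$, with total degree uniformly bounded by a constant $d=d(N)$. This polynomial-action property is the structural heart of the uniformly bounded theory for Cartan-type superalgebras and is essentially contained in the description of uniformly bounded weight $W_{m,n}$-modules in \cite{XL,LX}: every such module is a subquotient of a tensor module $T(P,V)$ for a weight module $P$ over the Weyl superalgebra and a finite-dimensional $\mathfrak{gl}(m,n)$-module $V$, and on these the $W$-action on a fixed weight space is polynomial in the monomial exponents, with degree bounded by $\dim V$, hence by $N$. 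Granting this, fix such a uniform degree bound $d$.

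Once $d$ is fixed, each matrix entry of
\[
t^{\alpha+(r-i)e_{j}}\xi_{I}\partial\cdot t^{\beta+ie_{j}}\xi_{J}\partial'\colon M_{\lambda}\longrightarrow M_{\lambda+\nu}
\]
is a polynomial in $i$ of degree at most $2d$, being a product of two polynomials of degree $\le d$ in the $\gamma$-arguments $\alpha+(r-i)e_{j}$ and $\beta+ie_{j}$, which themselves depend linearly on $i$. The classical finite-difference identity
\[
\sum_{i=0}^{r}(-1)^{i}\binom{r}{i}p(i)=0\qquad\text{whenever }\deg p<r
\]
then forces every matrix entry of $w^{r,j,\partial,\partial'}_{\alpha,\beta,I,J}$ to vanish as soon as $r>2d$. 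Since $d$ depends only on $N$, the same $r$ works for all $\alpha,\beta,I,J,j,\partial,\partial'$ and all weights $\lambda$, giving $w^{r,j,\partial,\partial'}_{\alpha,\beta,I,J}M=0$.

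The main obstacle is the polynomial-degree bound invoked in the middle paragraph; it is where the uniform boundedness is genuinely used. An alternative viewpoint is that the recursion
\[
w^{r+1,j,\partial,\partial'}_{\alpha,\beta,I,J}=w^{r,j,\partial,\partial'}_{\alpha+e_{j},\beta,I,J}-w^{r,j,\partial,\partial'}_{\alpha,\beta+e_{j},I,J}
\]
exhibits $w^{r,j,\partial,\partial'}_{\alpha,\beta,I,J}$ as an $r$-fold discrete difference of $w^{0,j,\partial,\partial'}_{\alpha,\beta,I,J}$ in an interpolating variable between $\alpha$ and $\beta$; polynomial control of degree $\le d$ for the underlying operator-valued function of $(\alpha,\beta)$ then yields the vanishing by pure finite-difference calculus, so the combinatorial step and weight bookkeeping are routine once the polynomial bound is established.
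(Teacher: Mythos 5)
Your reduction of the statement to a finite-difference identity is the right general idea --- it is essentially the mechanism behind the base case in the literature --- but the proof has a genuine gap exactly where you flag it: the uniform polynomial-degree bound for the matrix entries of $t^{\gamma}\xi_I\partial$ is asserted, not established, and the justification you offer does not go through. First, the cited classifications in \cite{XW}, \cite{XL} concern \emph{simple} bounded weight modules, whereas the module to which the theorem is applied in this paper, namely $\mathfrak{W}(M)$ for a Whittaker module $M$, is uniformly bounded but not known to be simple, semisimple, or even of finite length (a uniformly bounded module can be an infinite direct sum, and non-split extensions are not covered by a classification of simples). Second, in the Billig--Futorny and Xue--Lu line of work the vanishing of precisely these difference operators $w^{r,j,\partial,\partial'}_{\alpha,\beta,I,J}$ is an \emph{input} to the classification of bounded modules, so deducing the vanishing from the classification risks circularity. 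Third, even granting polynomiality, your ``product of two polynomials of degree $\le d$'' step is delicate: the intermediate weight space $M_{\lambda+\beta+ie_j-e_{*'}}$ through which the composite factors depends on $i$, so one needs a coherent choice of bases across all these weight spaces (the weight spaces of a general uniformly bounded module need not even have constant dimension) before ``matrix entries polynomial in $i$'' is meaningful. None of these points is fatal to the strategy, but each requires real work that the proposal does not supply.

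The paper takes a different and shorter route that avoids re-proving the polynomiality: it quotes \cite[Lemma 4.5]{XL1}, which gives $w^{r_1,j,\partial,\partial'}_{\alpha,\beta,\emptyset,\emptyset}M=0$ for $\partial,\partial'\in\triangle_{m,0}$ on any uniformly bounded $W_{m,0}$-module (this is where all the analytic content you are trying to rebuild is concentrated), and then propagates the vanishing to arbitrary $I,J$ and arbitrary $\partial,\partial'\in\triangle_{m,n}$ by computing commutators such as $[w^{r_1,j,\partial,\partial'}_{\alpha,\beta,\emptyset,\emptyset},\,t^{s}\xi_I\partial']$ in $\mathcal{U}(W)$, at the cost of increasing $r_1$ by $2$ at each stage. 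If you want to keep your more self-contained approach, you would need to actually prove the uniform degree bound for arbitrary (not just simple) uniformly bounded modules; otherwise the efficient fix is to take the purely even case as a citable black box and supply the bracket computations that bootstrap it to the super case.
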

	\begin{proof}
		The proof is analogous to the proof  of \cite[Lemma 4.2]{LX}. Note that the definition of a weight module in  \cite{LX} is different from our definition of  weight module, as we only require a diagonalizable action of the Cartan subalgebra of $W_{m,0}$. Nevertheless, the argument remains valid.  Let $M$ be a $W_{m,0}$-module. By  \cite[Lemma 4.5]{XL1},  there exists $r_1\in \N$  such that $w_{\alpha,\beta,  \emptyset, \emptyset}^{r,j,\partial ,\partial^\prime}.M=0$, for $\partial ,\partial^\prime\in \triangle_{m,0}$, $j\in\{1,\cdots m\}$. Computing the Lie brackets  $[w_{\alpha,\beta,  \emptyset, \emptyset}^{r,j,\partial ,\partial^\prime}, t^s\xi_I\partial^\prime]$ in $\mathcal{U}(W)$, we see that $w_{\alpha,\beta,  \emptyset, I}^{r_1+2,j,\partial ,\partial }.M=0$ for all $\alpha, \beta\in \Z^m_{+}$ and $I\subseteq \{1,2,\dots n\}$ and $j\in \{1\cdots ,m\}$. Let $J\subseteq \{1,2,\dots n\}$. Computing the Lie bracket $[w_{\alpha,\beta,  \emptyset, I}^{r_1+2,j,\partial ,\partial }, t^\gamma \partial^{\prime}]$  in $\mathcal{U}(W)$, with $\gamma\in \Z^m_+$ and $\partial^\prime \in \triangle_{m,n}\setminus\{\partial\}$, we see that $w_{\alpha,\beta,  J, I}^{r_1+4,j,\partial ,\partial^\prime}.M=0$. Again choosing $\partial^{\prime\prime}\in \triangle_{m,n}\setminus \{\partial\}$ and computing the appropriate Lie brackets we see that $w_{\alpha,\beta,  J, I}^{r_1+4,j,\partial^{\prime \prime},\partial^\prime}.M=0$, and hence we have our result.        
     \end{proof}
    
	\begin{theo}\label{hat{M}}
		If $M\in \Omega_{\mathbf a}^W$ is generated by $Wh_{\mathbf a}(M)$ as a $W$-module, then $\widehat{M} \in \Omega_{\mathbf a}^{ AW_{m,n}}$.    
	\end{theo}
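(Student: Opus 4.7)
The proof has three components: verifying that $\widehat{M}$ carries an $(AW)_{m,n}$-module structure, that $\tfrac{\partial}{\partial t_i}-a_i$ and $\tfrac{\partial}{\partial \xi_j}$ act locally nilpotently on it, and that $\dim\mathrm{Wh}_{\mathbf{a}}(\widehat{M})<\infty$. The first two are essentially formal. The subspace $K(M)$ is by construction the largest $A$-stable subspace of $\mathrm{Ker}(\theta)$, which is exactly what one needs so that the induced $A$-action on the quotient $\widehat{M}$ is associative; one also checks $K(M)$ is $\widetilde{W}$-stable using $[W,A]\subseteq W$ together with the fact that $\theta$ is a $W$-module map. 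For local nilpotency, the Leibniz identity gives
\[
(\partial-\phi_{\mathbf a}(\partial))^k(X\otimes m)=\sum_{j=0}^{k}\binom{k}{j}\mathrm{ad}(\partial)^{k-j}(X)\otimes(\partial-\phi_{\mathbf a}(\partial))^j m,
\]
which vanishes for $k$ large because $\mathrm{ad}(\partial)$ strictly lowers the $t_i$- or $\xi_j$-degree of $X$, while $(\partial-\phi_{\mathbf a}(\partial))$ is locally nilpotent on $M$ by hypothesis. This descends to the quotient $\widehat{M}$, giving the Whittaker property of type $\mathbf{a}$.

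For the finiteness of the Whittaker subspace, my plan is to use the induced map $\hat\theta:\widehat{M}\to M$. It is surjective because non-singularity of $\mathbf{a}$ forces $WM=M$: for any $v\in\widetilde{Wh_{\mathbf a}}(M)$ we have $v=a_i^{-1}\tfrac{\partial}{\partial t_i}v\in WM$, and then Lemma \ref{CartanF} gives $M=\mathcal{U}(\mathfrak h_{m,0})\widetilde{Wh_{\mathbf a}}(M)\subseteq WM$. Left-exactness of $\mathrm{Wh}_{\mathbf a}(-)$ applied to $0\to\mathrm{Ker}(\hat\theta)\to\widehat{M}\to M\to 0$ yields
\[
\dim\mathrm{Wh}_{\mathbf a}(\widehat{M})\leq \dim\mathrm{Wh}_{\mathbf a}(\mathrm{Ker}(\hat\theta))+\dim\mathrm{Wh}_{\mathbf a}(M),
\]
and the second term is finite by hypothesis. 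The task therefore reduces to bounding the Whittaker vectors in $\mathrm{Ker}(\hat\theta)$.

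The main obstacle is controlling $\mathrm{Wh}_{\mathbf a}(\mathrm{Ker}(\hat\theta))$. My strategy is to use the isomorphism $\pi_1:K\otimes\mathcal{U}(T)\to\bar{U}$ to decompose the $AW$-action on $\widehat{M}$, so that a Density-theorem argument (in the spirit of the injectivity step of Lemma \ref{l43}) identifies $\mathrm{Wh}_{\mathbf a}(\widehat{M})$ with a $T$-module built from the finitely many $\mathcal U(\mathfrak h_{m,0})$-free generators of $M$ supplied by Corollary \ref{Cartan free}. The relations defining $\mathrm{Ker}(\hat\theta)$ should then descend from this finite generating set, so that $\mathrm{Wh}_{\mathbf a}(\mathrm{Ker}(\hat\theta))$ lives in a finitely generated $T$-module; combined with the uniform annihilation of Theorem \ref{UBM} applied to the weighting $\mathfrak{W}(\widehat{M})$, this would furnish the needed finite bound. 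I expect the crux of the argument to be making this last correspondence precise: namely, showing that modding out $\mathrm{Ker}(\theta)$ by the full $K(M)$ (and not merely the $\widetilde{W}$-submodule generated by it) suffices to cut the Whittaker vectors of the kernel down to a finite-dimensional space.
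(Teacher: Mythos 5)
Your first two components (the $AW$-module structure on $\widehat{M}$ and the local nilpotency of $\partial-\phi_{\mathbf a}(\partial)$ via the Leibniz expansion) are fine and correspond to the paper's unproved remark. But the heart of the theorem --- the finite-dimensionality of the Whittaker space of $\widehat{M}$ --- is not actually proved in your proposal. Your reduction via $0\to\mathrm{Ker}(\hat\theta)\to\widehat{M}\to M\to 0$ only shifts the problem onto $\mathrm{Wh}_{\mathbf a}(\mathrm{Ker}(\hat\theta))$, and the subsequent paragraph is a plan rather than an argument: phrases such as ``should then descend'' and ``I expect the crux of the argument to be making this last correspondence precise'' flag exactly the step that is missing. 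A priori $\mathrm{Ker}(\theta)/K(M)$ has no reason to have small Whittaker space, and no Density-theorem argument applies to $\widehat{M}$ before one knows it is an object of $\Omega_{\mathbf a}^{\widetilde W}$ (which is what you are trying to prove); so the proposed route is circular at the decisive point.

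The paper closes this gap by a direct generation argument that never looks at $\mathrm{Ker}(\hat\theta)$. Since $M$ is $\mathcal{U}(\mathfrak{h}_{m,0})$-free of finite rank (Corollary \ref{Cartan free}), the weighting $\mathfrak{W}(M)$ is uniformly bounded, so Theorem \ref{UBM} gives a single $r$ with $w_{\alpha,\beta,I,J}^{r,j,\partial,\partial'}\,\mathfrak{W}(M)=0$; because $\bigcap_{\gamma}I_\gamma M=0$ for a finitely generated free $\mathcal{U}(\mathfrak{h}_{m,0})$-module, these operators already annihilate $M$ itself. Consequently each element $\sum_{i=0}^{r}(-1)^i\binom{r}{i}\,t^{p-ie_1}\xi_I\partial\otimes t^{s+ie_1}\xi_J\partial' v$ lies not merely in $\mathrm{Ker}(\theta)$ but in $K(M)$ (its $A$-multiples have the same form, since the relations hold on all of $M$), which yields in $\widehat{M}$ the recursion
\[
(t^{p}\xi_I\partial\boxtimes t^{s}\xi_J\partial')v=\sum_{i=1}^{r}(-1)^{i-1}\binom{r}{i}\,(t^{p-ie_1}\xi_I\partial\boxtimes t^{s+ie_1}\xi_J\partial' v),
\]
strictly lowering $|p|$. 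By induction $\widehat{M}$ is spanned by the $t^{q}\xi_{I_1}\partial\boxtimes M$ with $|q|\le rm$, hence is finitely generated over $\mathcal{U}(\mathfrak{h}_{m,0})$, and Lemma \ref{CartanF} then forces $\widetilde{Wh_{\mathbf a}}(\widehat{M})$ to be finite-dimensional. This degree-reduction inside $K(M)$ is the idea your proposal is missing; without it the finiteness claim is unsupported.
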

    
	\begin{proof}
		We need to prove that $\widetilde{Wh_{\mathbf a}}(\widehat{M})$ is finite dimensional.
		By Lemma \ref{CartanF} and Corollary \ref{Cartan free}, $M$ is a free $\mathcal{U}(\mathfrak{h}_{m,0})$-module of finite rank. So, $\mathfrak{W}(M)$ is a uniformly bounded weight $W$-module. The dimension of each weight space of $\mathfrak{W}(M)$ does not exceed the rank of $M$ as a free $\mathcal{U}(\mathfrak{h}_{m,0})$-module. By Theorem \ref{UBM}, there exists $r\in \N$ such that 
		$$w_{\alpha,\beta, I, J}^{r,j,\partial,\partial^\prime}\cdot \mathfrak{W}M=0,$$
		for all $\alpha, \beta \in \Z^m_+,\; I,J \subseteq \{1,2,\dots, n\},\; 1\leq j\leq m,\; \partial, \partial^\prime \in \triangle$.\\
		Now we deduce that 
		$$w_{\alpha,\beta, I, J}^{r,j,\partial,\partial^\prime}\cdot M\subseteq \bigcap_{\gamma \in \Z^m}I_{\gamma}M=0,$$
		for all $\alpha, \beta \in \Z^m_+,\; I,J \subseteq \{1,2,\dots, n\},\; 1\leq j\leq m,\; \partial, \partial^\prime \in \triangle,$
		where $I_\gamma$ is the maximal ideal of $\mathcal{U}(\mathfrak{h}_{m,0})$ generated by $h_1-\gamma_1,\dots, h_n-\gamma_n$. Here we have $\bigcap_{\gamma \in \Z^m}I_{\gamma}M=0,$ since $M$ is a finitely generated free $\mathcal{U}(\mathfrak{h}_{m,0})$-module. \\
		Now $M$ being simple, we  have $M=WM$.  We will prove by induction on $|p|=p_1+\dots+p_m$ that 
		$$(t^p\xi_I\partial \boxtimes t^s\xi_J\partial^\prime)v\in  \sum_{\substack{|q|\leq rm,\\ I_1\subseteq \{1\cdots n\},\\ \partial \in \triangle}}t^q\xi_{I_1}\partial \boxtimes M,$$
		for all $v\in M$, $p,s\in \Z_+^m,\; I,J\subseteq \{1,2,\dots,n\},\; \partial,\partial^\prime \in \triangle$.
		The result is clear for $p\in \Z^m_+$ with $|p|\leq rm$. So we assume that $|p|>rm$. Without loss of generality, we may assume that $p_1>r$. \\
        
		For any $p\in \Z_+^m$, we have 
	 
		$$\theta(\sum_{i=0}^r(-1)^i{r\choose i}t^{p-ie_1}\xi_I\partial\otimes (t^{s+ie_1}\xi_J\partial ^\prime \xi_Jv))$$	 
		$$=\sum_{i=0}^r(-1)^i{r\choose i}t^{p-ie_1}\xi_I\partial  t^{s+ie_1}\xi_J\partial ^\prime \xi_Jv$$
		$$=w_{p,s,I,J}^{r,1,\partial, \partial^\prime}=0.$$
		By the construction of the cover $\widehat{M}$, this means that 
		
		$$\sum_{i=0}^r(-1)^i{r\choose i}(t^{p-ie_1}\xi_I\partial\otimes t^{s+ie_1}\xi_J\partial ^\prime \xi_J)v \in K(M).$$
		Then we get 
		\[(t^p\xi_I\partial  \boxtimes t^s \xi_J \partial^\prime) v=\sum_{i=1}^r(-1)^{i-1}{r\choose i}(t^{p-ie_1}\xi_I\partial\boxtimes t^{s+ie_1}\xi_J\partial ^\prime \xi_J v)\in \widehat{M},\] which belongs to 
		\[ \sum_ {\substack{|q|\leq rm,\\ I_1\subseteq \{1\cdots n\},\\ \partial \in \triangle}}t^q\xi_{I_1}\partial\boxtimes M\]
by the induction hypothesis. 
\medskip

Therefore $\widehat{M}$ is a finitely generated $\mathcal{U}(\mathfrak{h}_{m,0})$-module. By Lemma \ref{CartanF} the space $\widetilde{Wh_{\mathbf a}}(\widehat{M})$ is finite dimensional, since otherwise $\widehat{M}$ is a free $\mathcal{U}(\mathfrak{h}_{m,0})$- module of infinite rank, which is a contradiction.	
	\end{proof}

    Now we are ready to prove our main result.
    
    \begin{theo}\label{MT}
		
		If $M\in \Omega_{\mathbf a}^W$ is a simple module, then $M$ is isomorphic to a simple quotient of $T(A^{\mathbf a}, V)$ for some finite-dimensional simple $\mathfrak{gl}(m,n)$-module $V$.		
	\end{theo}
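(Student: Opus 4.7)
The plan is to reduce the theorem to the equivalence of Theorem \ref{equiv} by passing through the cover $\widehat M$ constructed in Section \ref{W}.

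First, I would verify that $M$ is generated as a $W$-module by its Whittaker vectors. Because $\triangle$ acts locally finitely on $M$ and $M$ is of type $\mathbf a$, the commuting even operators $\frac{\partial}{\partial t_i}-a_i$ are locally nilpotent, so a common generalized eigenvector for them exists; the odd derivations $\frac{\partial}{\partial \xi_j}$ are honestly nilpotent (they already square to zero in $\mathcal{U}(W)$), and applying an appropriate polynomial in them to such a vector produces a nonzero element of $Wh_{\mathbf a}(M)$. Simplicity of $M$ then forces $M=\mathcal{U}(W)\cdot Wh_{\mathbf a}(M)$, which places us in the setting where Theorem \ref{hat{M}} applies.

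Next, I form the cover $\widehat M$ together with its canonical $W$-module surjection $\hat\theta\colon\widehat M\to M$. By Theorem \ref{hat{M}}, $\widehat M\in\Omega_{\mathbf a}^{\widetilde W}$, so the equivalence $\mathcal F$ of Theorem \ref{equiv} identifies $\widehat M\cong T(A^{\mathbf a},V')$ with $V':=Wh_{\mathbf a}(\widehat M)$ a finite-dimensional $T$-module. Because $V'$ is finite dimensional, Lemma 3.7(2) ensures that $\mathfrak m^2\triangle$ acts trivially on it, and hence $V'$ is in fact a finite-dimensional $\mathfrak{gl}(m,n)$-module.

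To finish, fix a $\mathfrak{gl}(m,n)$-composition series $0=V_0\subset V_1\subset\cdots\subset V_\ell=V'$. Applying $\mathcal F$ produces a filtration of $\widehat M$ by $\widetilde W$-submodules $0=N_0\subset N_1\subset\cdots\subset N_\ell=\widehat M$ whose successive quotients are $N_i/N_{i-1}\cong T(A^{\mathbf a},V_i/V_{i-1})$, each of them simple in $\Omega_{\mathbf a}^{\widetilde W}$ of the form prescribed by Theorem \ref{EXW}. The images $M_i:=\hat\theta(N_i)$ form an ascending chain of $W$-submodules of $M$ with $M_\ell=M$, so by simplicity of $M$ there is a smallest $j$ with $M_j=M$, and then necessarily $M_{j-1}=0$. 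Therefore $\hat\theta$ induces a surjective $W$-module map
\[
T(A^{\mathbf a},V_j/V_{j-1})\;\cong\;N_j/N_{j-1}\;\twoheadrightarrow\;M,
\]
and setting $V:=V_j/V_{j-1}$ realises the simple module $M$ as a (simple) quotient of $T(A^{\mathbf a},V)$ for a finite-dimensional simple $\mathfrak{gl}(m,n)$-module $V$.

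The substantive content is already contained in Theorem \ref{hat{M}} (which controls the size of the cover by uniformly bounding the weighting $\mathfrak W(M)$) and in the equivalence Theorem \ref{equiv}; once these are invoked, the only remaining work is the composition-series bookkeeping above. I do not expect a genuine obstacle beyond checking that the filtration of $\widehat M$ coming from $\mathcal F$ really consists of $\widetilde W$-submodules, so that the induced $M_i$ are automatically $W$-submodules of $M$ and the simple successive quotient $T(A^{\mathbf a},V_j/V_{j-1})$ surjects onto $M$ as a $W$-module.
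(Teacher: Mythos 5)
Your proof is correct and follows essentially the same route as the paper: both pass to the cover $\widehat M$, invoke Theorem \ref{hat{M}} together with the equivalence of Theorem \ref{equiv} (equivalently Theorem \ref{EXW}), and then extract a simple subquotient of the resulting $T(A^{\mathbf a},V')$ that surjects onto $M$. The only divergence is in the final bookkeeping --- the paper chooses a covering module with $\dim Wh_{\mathbf a}$ minimal and argues that it must already be simple over $\widetilde W$, whereas you filter $\widehat M$ via a composition series of $Wh_{\mathbf a}(\widehat M)$ --- and your preliminary verification that $Wh_{\mathbf a}(M)\neq 0$, so that $M$ is generated by its Whittaker vectors and Theorem \ref{hat{M}} applies, is a point the paper leaves implicit.
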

    
	\begin{proof}
		By Theorem \ref{hat{M}}, we have a $W$-module epimorphism $\phi:M_1\rightarrow M$, where $M_1$ is a  Whittaker $\widetilde{W}$-module of type $\mathbf{a}$ with finite-dimensional $\text{Wh}_{\mathbf a}(M_1)$. Without loss of generality, we assume that dimension of  $Wh_a(M_1)$ is minimal. This will force $M_1$ to be irreducible, otherwise we can find one maximal submodule of $M_1$, say $M_2$ over $\widetilde{W}$. Then by Corollary \ref{Cartan free}, we see that both $M_2$ and $M_1/M_2$ are $\mathcal{U}(\mathfrak{h})$-free module of rank less than $\text{dim Wh}_{\mathbf a}(M_1)$. Now $M$ being simple $W$-module, either $\phi(M_2)=0$ or $\phi(M_2)=M$. So either $M_1/M_2$ or $M_2$ has a simple $W$-quotient module isomorphic to $M,$ which contradicts the minimality of $M_1$. Now by Theorem \ref{EXW}, we have $M_1\cong T(A^{\mathbf a},V)$ for a finite-dimensional $\mathfrak{gl}(m,n)$ module $V$ and hence $M$ is isomorphic to a simple quotient of $T(A^{\mathbf a},V)$.
	\end{proof}
    
The simplicity of these $W$-modules and their irreducible quotients were described in  \cite{XW} (see \cite{LLZ} for Lie algebra case). Together with these results, Theorem \ref{MT} provides a classification of all simple non-singular Whittaker modules for $W_{m,n}$ with finite-dimensional Whittaker subspaces.

	\section{Acknowledgments}
\noindent V.Futorny is supported by the NSF of China (12350710178 and 12350710787).


\begin{thebibliography}{100}
		
		\bibitem{ALZ} D. Adamovic, R. Lu, K. Zhao, {Whittaker modules for affine Lie algebra $A_1^{(1)}$}, Adv. Math. 289 (2016) 438-479.
          
		\bibitem{AP} D. Arnal, G. Pinczon, {\sl On algebraically irreducible representations of the Lie algebra $\mathfrak{sl}_2$} J. Math. Phys. {\bf 15} (1974) 350-359.
  
		\bibitem{BCW} I. Bagci, K. Christodoulopoulou, E. Wiesner, {\sl Whittaker Category and Whittaker Modules for Lie superalgebras} Comm. in Algebra {\bf 42} (2014) 11 4932-4947.

        \bibitem{BF} Y. Billig, V. Futorny, {\sl Classification of irreducible representations of Lie algebra of vector fields on torus}. J Reine Angew. Math {\bf 720} (2016) 199-2016.
        
        \bibitem{BFIK} Y. Billig, V. Futorny, K. Iohara, I. Kashuba, {\sl Classification of simple strong Harish-Chandra $W_{m,n}$ module}. arxiv:2006.05618.
  
		\bibitem{BM} P. Batra, V. Mazorchuk, {\sl Blocks and modules for Whittaker pairs } J. Pure. Appl. ALgebra {\bf 215 (7)} (2011) 1552-1568.
  
        \bibitem{BO} G. Benkart, M. Ondrus, {\sl Whittaker modules for generalized Weyl algebras}, Represent. Theory 13 (2009) 141-164.

       

        \bibitem{CFR} L. Calixto, V. Futorny, H. Rocha, {\sl Harish-Chandra modules for map and affine Lie superalgebras}, 	arXiv:2104.07517 .

        \bibitem{Ch} C. Chen, {\sl Whittaker modules for classical Lie superalgebras,} Commun. Math. Phys. 388 (1) (2021) 351-383.

        \bibitem{CGLW} H. Chen, L. Ge, Z. Li, L. Wang, {\sl Classical Whittaker modules for the affine
         Kac-Moody algebras $A_N^{(1)}$,}  Adv. Math. 454 (2024) 109874.

         \bibitem{CJ} X. Chen, C. Jiang, {\sl Whittaker modules for the twisted affine Nappi-Witten Lie algebra $\hat{H}_4[\tau]$}, J. Algebra 546 (2020) 37-61.

        \bibitem{CW} H. Chen, L. Wang, {\sl Whittaker Modules Over Some Generalized Weyl Algebras}, Algebras Reprent. 26 (2023) 3047-3064.
        
        \bibitem{C} K. Christodoulopoulou, {\sl Whittaker modules for Heisenberg algebras and imaginary Whittaker modules for affine Lie algebras} J. Algebra {\bf 320} (2008) 2871-2890.

        \bibitem{CSS} P. Chakraborty, Y. Shen, B Shu, {\sl Representations of a class of infinite-dimensional primitive Lie superalgebras} arXiv:2503.18011.

        \bibitem{GS} D. Grantcharov, V. Serganova, {\sl Simple weight modules with finite weight multiplicities over the Lie algebras of polynomial vector fields,} J. Reine Angew. Math. 792 (2022) 93-114.

        \bibitem{GL} L. Ge, Z. Li, {\sl Classical Whittaker modules for the classical affine Kac-Moody algebras} J. Algebra {\bf 644} (2024) 23-63. 
       
      \bibitem{GLZ} X. Guo, R Lu, K. Zhao,  {\sl Irreducible modules over the Virasoro algebra} Doc. Math 16 (2011) 709-721. 
      
     \bibitem{K} B. Kostant, {\sl On Whittaker vector and representation theory} Invent. Math. {\bf 48} (1978) 101-184.

      \bibitem{LLZ} G. Liu, R. Lu, K. Zhao {\sl Irreducible Witt modules from Weyl modules and $\mathfrak{gl}_n$-modules,} J. Algebra 511 (2018), 164-181.
     \bibitem{LPX} D. Liu, Y. Pei, L. Xia, {Whittaker modules for the super-Virasoro algebras} J. Algebra Appl. 18 (2019) 1950211.

     \bibitem{LWZ} D. Liu, Y. Wu, L. Zhu, {\sl Whittaker modules for the twisted Heisenberg-Virasoro algebra} J. Math. Phys. 51 (2010) 023524.

      \bibitem{LZ} G. Liu, K. Zhao, {\sl The category of weight modules for symplectic oscillator Lie algebras}, Transform. Group (2021).

      \bibitem{LX}R. Lu, Y. Xue, {\sl Bounded weight modules over the Lie superalgebra of Cartan W-type,} Algebras and Representation Theory (2023) 26: 763-781.

  
		\bibitem{JN} J. Nilsson, {\sl $U(\mathfrak{h})$-free module and coherent families,} J. Pure Appl. Algbebra 220 (4) (2016) 1475-1488.
		
		\bibitem{O} M. Ondrus, {\sl Whittaker modules for $U_q(\mathfrak{sl}_2)$} J. algebra {\bf 289} (2005) 192-213.

        \bibitem{OW} M. Ondrus, E. Wiesner, {\sl Whittaker Modules for the Virasoro Algebra} J. Algebra Appl. 8 (2009) 363-377. 

        \bibitem{OW1} M. Ondrus, E. Wiesner, {\sl Whittaker Categories for the Virasoro Algebra} Commun. Algebra 41 (2013) 3910-3930.
		\bibitem{S} A. Sevostyanov, {\sl Quantum deformation of Whittaker modules and the Toda Lattice} Duke Math. J. 105 (2000) 211-238.

       \bibitem{XGZ} L. Xia, X. Guo, J. Zhang, {\sl Classification on irreducible Whittaker modules over quantum group $U_q(\mathfrak{sl}_3, \Lambda)$} Front. Math. China 16(4) (2021) 1089-1097.

      \bibitem{XZ} L. Xia, K. Zhao, {\sl Twisted Whittaker modules over $U_q(\mathfrak{gl}_{n+1})$} Science China Math. 66(10) (2023) 2191-2202.
  
		\bibitem{XL} Y. Xue, R. Lu, {\sl Simple weight modules with finite-dimensional weight spaces over Witt superalgebras} J. Algebra {\bf 574} (2021) 92-116.

        \bibitem{XL1} Y. Xue, R. Lu, {\sl Classification o simple bounded weight modules of the Lie algebra of vectorfields on $\C^n$}  Israel J. Math. {\bf 253} (2023), no. 1, 445–468.

  \bibitem{XW}  Y. Xue, Y. Wang, {\sl Tensor modules over Witt superalgebras} Science China Mathematics {\bf 66(7)} 1429-1448.
		
		\bibitem{ZL} Y. Zhao, G. Liu, {\sl Whittaker category for the Lie algebra of polynomial vector fields} J. Algebra {\bf 605} (2022) 74-88.
		
		
		
		
		
		
		
	\end{thebibliography}
\end{document}